\newcommand{\leqnomode}{\tagsleft@true}
\newcommand{\reqnomode}{\tagsleft@false}
\begin{document}

\title{Attempting perfect hypergraphs}
\author{Maria Chudnovsky\thanks{This material is based upon work supported in
    part by the U. S. Army
Research Office under grant   number W911NF-16-1-0404, and supported by  NSF grant DMS 1763817 and ERC advanced grant 320924.}\\
Princeton University, Princeton, NJ 08544, USA
\\
\\
and Gil Kalai\thanks{Supported by ERC advanced grant 320924 and by ISF Grant 1612/17 .}\\
Reichman University, Herzliya 4610101, Israel\\
and The Hebrew University of Jerusalem, Jerusalem 91904, Israel}
 
\date {June 7, 2020; revised \today}

\newtheorem{theorem}{}[section]

\maketitle
\abstract
    {

    
      We study several extensions of the notion of perfect graphs to $k$-uniform hypergraphs.
      One main definition extends to hypergraphs the notion of perfect graphs based on coloring. Let $G$ be a $k$-uniform hypergraph.
A coloring of a $k$-uniform hypergraph $G$ is {\em proper} if it is a coloring of the $(k-1)$-tuples with
elements in $V(G)$ in such a way that  no edge of $G$ is a monochromatic
$K_k^{k-1}$.

A $k$-uniform hypergraph $G$ is {\it $C_\omega$-perfect} if 
for every induced subhypergraph $G'$ of
$G$ we have:
\begin{itemize}
  \item if  $X \subseteq  V(G')$ with $|X|<k-1$, then  
there is a proper $(\omega(G')-k+2)$-coloring of $G'$ (so $(k-1)$-tuples are
colored) that
restricts to a proper $(\omega(G')-k+2)$-coloring of $lk_{G'}(X)$ (so
$(k-|X|-1)$-tuples are colored).
\end {itemize}
Another main definition is the following: 
A $k$-uniform hypergraph $G$ is hereditary perfect (or, briefly, $H$-perfect) if all links of sets of $(k-2)$ vertices
are perfect graphs. 

The notion of $C_\omega$ perfectness is not closed under complementation (for $k>2$) and we define $G$ to be doubly perfect if both $G$ and its complement are $C_\omega$ perfect.
We study doubly-perfect
hypergraphs: In addition to
perfect graphs nontrivial doubly-perfect graphs consist of a restricted interesting class of 3-uniform hypergraphs, and within this class we give a complete characterization of doubly-perfect $H$-perfect hypergraphs.  

}

      %


\section{Introduction}

The purpose of this paper is to study some extensions of the notion of perfect graphs to $k$-uniform hypergraphs, $k>2$.
We start with basic notation and terminology regarding hypergraphs
followed by basic definitions and properties of perfect graphs.
Denote by $K^p_q$ the complete $p$-uniform hypergraph on $q$ vertices.
Let $G$ be a $k$-uniform hypergraph.
The {\em complement}
$G^c$ of $G$ is the $k$ uniform hypergraph such that a $k$-subset $X$ of
$V(G)$ is a hyperedge of $G^c$ if and only if $X$ is not a hyperedge of $G$.
For $X \subseteq  V(G)$ we denote by $G[X]$ the $k$-uniform  hypergraph
induced by $G$ on $X$, and by $lk_G(X)$ the $(k-|X|)$-uniform
hypergraph with vertex set $V(G) \setminus X$ and such that $Y \in E(G(X))$ if
and only if $X \cup Y \in E(G)$ (this is the {\em link} of $X$).
Observe that $lk_G^c(X)=lk_{G^c}(X)$. We denote by
$\omega(G)$ the maximum size of $X \subseteq V(G)$ such that
$G[X]$ is a complete $k$-uniform hypergraph, and by
$\alpha(G)$ the maximum size of $X \subseteq V(G)$ such that
$G^c[X]$ is a complete $k$-uniform hypergraph.

\begin {figure}
\includegraphics[scale=0.3]{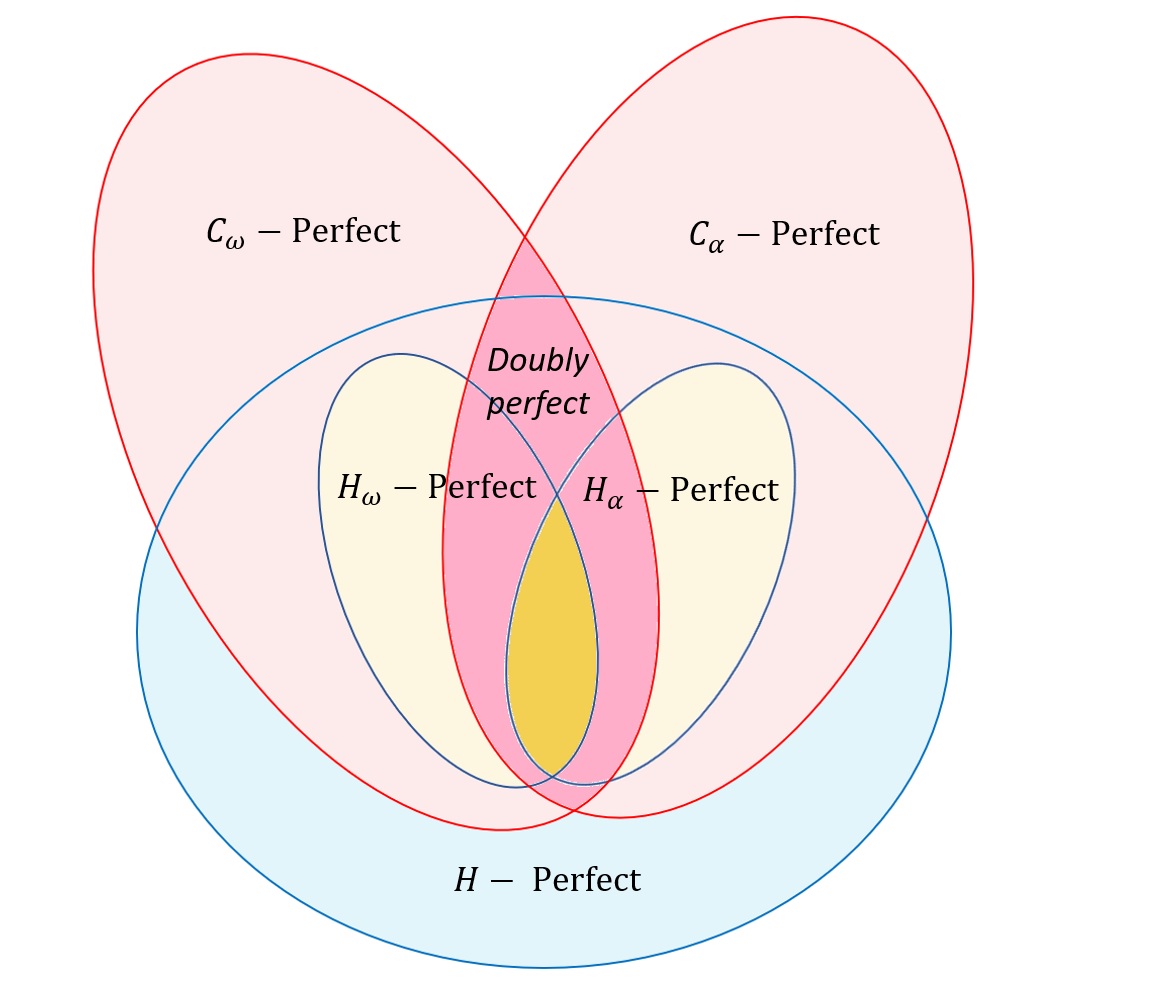}
\caption{Classes of perfect hypergraphs considered in this paper. The figure describes (out of scale) the case of 3-uniform hypergraphs.
  For graphs
  all the classes coincide with the class of perfect graphs. For $k$-uniform hypergraphs, $k>3$ the class
  of doubly-perfect hypergraphs
  contains
  only the empty hypergraph and complete hypergraph.
}
\label{fig:perfect}
\end{figure}


A graph $G$ is {\it perfect} if for every induced subgraph $H$ of $G$, $\chi (H)=\omega(H)$. The Weak Perfect Graph Theorem \cite {Lov72}
asserts that $G$ is perfect iff its complement $G^c$ is perfect. The Strong Perfect Graph Theorem \cite {CRST06} asserts
that $G$ is perfect iff $G$ is
a {\it Berge} graph, namely neither $G$ nor $G^c$  contains an induced cycle of odd length greater than 3. The class of perfect graphs
is a remarkable class of graphs with profound connections in mathematics, theoretical computer science and optimization. 
There is also a rich area of graph theory devoted to the study of graphs with forbidden induced subgraphs, and perfect graphs
play a central role in this study. 

Our purpose here is to propose several extensions of the notion of perfect graphs to $k$-uniform hypergraphs, $k>2$.
Our main definition extends to hypergraphs the notion of perfect graphs based on coloring.
Let $G$ be a $k$-uniform hypergraph.
A coloring of $G$ is {\em proper} if it is a coloring of the $(k-1)$-tuples with
elements in $V(G)$ in such a way that  no edge of $G$ is a monochromatic
$K_k^{k-1}$.

A $k$-uniform hypergraph is {\it $C_\omega$-perfect} if 
for every induced subhypergraph $G'$ of
$G$ we have:
\begin{itemize}
  \item if  $X \subseteq  V(G')$ with $|X|<k-1$, then  
there is a proper $(\omega(G')-k+2)$-coloring of $G'$ (so $(k-1)$-tuples are
colored) that
restricts to a proper $(\omega (G')-k+2)$-coloring of $lk_{G'}(X)$ (so
$(k-|X|-1)$-tuples are colored).
\end{itemize}


In Section \ref {s:coloring} 
we show that a weaker coloring
property which resembles the Berge property for graphs suffices for $C_\omega$-perfectness.
Define next $G$ to be {\em $C_\alpha$-perfect} if its complement $G^c$ is $C_\omega$-perfect. Now, call
$G$  {\it doubly perfect} if $G$ is both $C_\omega$-perfect and $C_\alpha$-perfect. 
We study the class of doubly-perfect hypergraphs in Section \ref {s:co}. 
A $k$-uniform hypergraph is {\em clique friendly} if
every set of  $k+1$ vertices contains either $k+1$ edges or at most two edges.
It follows easily from the definition that $C_\omega$-perfect hypergraphs are clique friendly and this implies that
if $G$ is a doubly-perfect hypergraph which is neither complete nor
empty,  then $k \le 3$. Moreover, for $k=3$ to be doubly perfect $G$ must be
 a cocycle, namely every 4 vertices span an even number of edges. 
 An equivalent definition of cocycles which we rely on in Section \ref {s:co} is the following: for a graph $G$ we write $co(G)$ to be 
 the 3-uniform hypergraph whose edges correspond to triples of vertices of $G$ that span an odd number of edges. 
 Every 3-uniform cocycle $C$ can be written 
 as $co(G)$ for some graph $G$.

A $k$-uniform hypergraph $G$ is {\it $H$-perfect} (or hereditary perfect) if the link $lk_G(X)$
of every set $X$ of $k-2$ vertices is a perfect graph.   
A $k$-uniform hypergraph is {\it $H_\omega$-perfect} if it is $H$-perfect and
clique friendly.
In Section \ref {s:coloring} we also prove that $H_\omega$-perfect hypergraphs are $C_\omega$-perfect. (The converse is not true.) 
Examples of $H_\omega$-perfect (hence also $C_\omega$-perfect) hypergraphs include $k$-partite hypergraphs,
simple hypergraphs and hypergraphs of
$k$-cliques of perfect graphs.
In Section \ref {s:co}
we also give a full description of
doubly-perfect hypergraphs which are also $H$-perfect. 

Our notion of doubly-perfect hypergraphs is closely related (and yet not identical)
to Simonyi's notion of
``entropy splitting hypergraphs'' \cite {Sim95}. This is also discussed in Section \ref {s:co}. 

Our concluding Section \ref {s:o} discusses other related notions of perfectness and possible connections.
We note that a different notion of perfectness for hypergraphs was pioneered by Voloshin \cite {Vol95,Vol02}
and further studied by Bujt\'as and Tuza \cite {BujTuz09}.
Seeking hypergraph analogs of
perfect graphs fits the ‘high-dimensional combinatorics’ programme of Linial \cite{Lin08,Lin18}. 

\section {Perfect hypergraphs and proper colorings}
\label {s:coloring}
\subsection {Basic properties of $C_\omega$ perfect hypergraphs}






    \begin {theorem} \label {tetra}
      Let $G$ be a $C_\omega$-perfect $k$-uniform hypergraph with $|V(G)|=k+1$.
      Let $X \subseteq V(G)$ with $|X|<k-1$, and suppose
      that $lk_G(X)$ is a clique. Then $G$ is a clique.
      Consequently, every $C_\omega$-perfect hypergraph is clique friendly.
     \end{theorem}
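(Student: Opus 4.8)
The plan is to reduce the whole statement to the two–dimensional situation in which the link is a triangle, and then to invoke the classical fact that an odd cycle is not $2$-colorable.

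First I would fix parameters and dispose of the easy case. Write $j=|X|$ and $m=k-j$, so $2\le m\le k$ since $j<k-1$. The link $lk_G(X)$ is an $m$-uniform hypergraph on the $m+1$ vertices of $V(G)\setminus X$, and being a clique means all $m+1$ of its $m$-subsets are edges; equivalently, \emph{every} $k$-set of $V(G)$ containing $X$ is an edge of $G$. In particular $G$ has at least $m+1\ge 3$ edges, each of which is a $k$-clique, so $\omega(G)\ge k$. Since $|V(G)|=k+1$, either $G$ is already a clique (and we are done) or $\omega(G)=k$ exactly. So I assume for contradiction that $G$ is not a clique, whence $\omega(G)=k$ and the relevant palette size is $\omega(G)-k+2=2$.

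The key step is a reduction from arbitrary $m$ down to $m=2$. I claim I may enlarge $X$ to a set $X'$ with $|X'|=k-2$ whose link is the triangle $K^2_3$. Choose any $W\subseteq V(G)\setminus X$ with $|W|=m-2$ and set $X'=X\cup W$; then $|X'|=k-2<k-1$ and $V(G)\setminus X'$ consists of exactly three vertices $\{a,b,c\}$. For any pair $\{p,q\}\subseteq\{a,b,c\}$ the set $X'\cup\{p,q\}=X\cup(W\cup\{p,q\})$ is $X$ together with an $m$-subset of $V(G)\setminus X$, hence an edge of $G$ because $lk_G(X)$ is a clique; so all three pairs are edges of $lk_G(X')$, i.e.\ $lk_G(X')=K^2_3$ is again a clique. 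Since the conclusion ``$G$ is a clique'' does not depend on the chosen set, and the palette size $2$ depends only on $\omega(G)$ and $k$, it suffices to treat the case $|X|=k-2$, $lk_G(X)=K^2_3$. Now I apply $C_\omega$-perfectness to $G'=G$ with this $X$: there is a proper $2$-coloring $c$ of the $(k-1)$-tuples of $G$ whose restriction is a proper $2$-coloring of $lk_G(X)$. That link is a graph on $\{a,b,c\}$, its restricted coloring assigns to each vertex $v$ the color $c(X\cup\{v\})$, and ``proper'' forces $c(X\cup\{a\})$, $c(X\cup\{b\})$, $c(X\cup\{c\})$ to be pairwise distinct. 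This is impossible with two colors, since $K^2_3$ is an odd cycle with chromatic number $3$; the contradiction shows $G$ is a clique. For the corollary I would apply the theorem inside each $(k+1)$-vertex induced subhypergraph $G[S]$ (itself $C_\omega$-perfect): if $G[S]$ has three edges they are $S\setminus\{a\}$, $S\setminus\{b\}$, $S\setminus\{c\}$ for distinct $a,b,c$, and taking $X=S\setminus\{a,b,c\}$ makes $lk_{G[S]}(X)=K^2_3$, so $G[S]$ is complete. Hence $G[S]$ has at most two edges or exactly $k+1$, which is clique friendliness.

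I expect the reduction to be the point that must be gotten right, since a direct $2$-colorability argument fails for $m\ge 3$: the complete $m$-uniform hypergraph on $m+1$ vertices does admit a proper $2$-coloring (equivalently, $K_{m+1}$ has a $2$-edge-coloring with no monochromatic vertex once $m+1\ge 4$), so for large links there is no obstruction coming from the link alone. It is only after collapsing the link to the triangle $K^2_3$ that the link itself becomes non-$2$-colorable, and this is precisely what makes the argument go through uniformly in $k$.
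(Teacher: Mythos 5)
Your proof is correct and takes essentially the same route as the paper's: the paper's much terser argument likewise fixes three vertices $a,b,c \in V(G)\setminus X$, implicitly applies the $C_\omega$-perfectness condition with the $(k-2)$-set $V(G)\setminus\{a,b,c\}$ (whose link is a triangle because $lk_G(X)$ is a clique), and derives the contradiction from the fact that $K_3$ admits no proper $2$-coloring. Your write-up simply makes explicit two steps the paper leaves implicit, namely the verification that enlarging $X$ to a $(k-2)$-set keeps the link a complete graph, and the deduction of clique-friendliness by applying the theorem to $(k+1)$-vertex induced subhypergraphs.
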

    \begin {proof}
  Suppose $G$ is not a clique, and so $\omega(G)=k$. Let
  $a,b,c \in V(G) \setminus X$. 
  Since $G$ is $C_\omega$-perfect,
  there exists a proper $2$-coloring of $G$ (so the $(k-1)$-tuples are colored)
  that restricts to a proper $2$-coloring of the complete graph with vertex set
  $\{a,b,c\}$ (so the vertices are colored), a contradiction.
  This proves \ref{tetra}.
\end{proof}

\begin {theorem} \label {CFwtetra}
      Let $G$ be a clique-friendly $k$-uniform hypergraph with $|V(G)|=k+1$.
      Let $X \subseteq V(G)$ with $|X|<k-1$, and suppose
    that $lk_G(X)$ is a clique. Then $G$ is a clique.
     \end{theorem}
\begin {proof}
  Suppose $G$ is not a clique. Then $|E(G)|<k+1$. Since $lk_G(X)$ is a clique,
  and $|V(G)|-|X| \geq 3$, we have that $|E(G)| \geq 3$, contrary to the fact
  that $G$ is clique friendly.
    This proves \ref{CFwtetra}.
\end{proof}

\begin{theorem} \label{friendlycliques}
  If $G$ is clique friendly then for every $X \subseteq V(G)$ with
  $|X|<k-1$, and every clique $K$ in $lk_G(X)$, we have that
  $K \cup X$ is a clique of $G$.  Consequently,
  $\omega(lk_G(X))  \leq \omega(G)-|X|$.
\end{theorem}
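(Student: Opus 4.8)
The plan is to reduce the whole statement to the case $|X|=1$, to settle that case using Theorem \ref{CFwtetra} (exactly as Theorem \ref{tetra} was used in the base case of Theorem \ref{smallerclique}), and then to read off the consequence about $\omega$ for free. Before the reduction I would record two elementary stability properties of clique-friendliness. First, it is inherited by induced subhypergraphs, since the defining condition only speaks about $(k+1)$-vertex subsets and an induced subhypergraph keeps all edges inside any vertex set. Second, and this is the property that makes the reduction legitimate, clique-friendliness is preserved under taking the link of a single vertex: if $G$ is clique friendly and $x\in V(G)$, then $lk_G(x)$ is clique friendly. To verify this I would take any $k$ vertices $v_1\ll v_k$ of $lk_G(x)$ and observe that the edges of $lk_G(x)$ among them are precisely the edges of $G$ containing $x$ inside the $(k+1)$-set $\{x,v_1\ll v_k\}$; clique-friendliness of $G$ says this set spans either $k+1$ edges or at most $2$, which forces $lk_G(x)$ to span either all $k$ edges or at most $2$ among $v_1\ll v_k$. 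Together with the composition identity $lk_G(X)=lk_{lk_G(x)}(X\setminus\{x\})$, this lets me peel the vertices of $X$ off one at a time while staying inside the clique-friendly world.

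The crux is the single-vertex case: if $G$ is clique friendly and $K$ is a clique of $lk_G(x)$, then $K\cup\{x\}$ is a clique of $G$. I would take an arbitrary $k$-subset $S\subseteq K\cup\{x\}$ and split into two cases. If $x\in S$, then $S\setminus\{x\}$ is a $(k-1)$-subset of the clique $K$, hence an edge of $lk_G(x)$, so $S\in E(G)$. The genuinely new case is $x\notin S$, i.e.\ $S\subseteq K$ with $|S|=k$: here I pass to the $(k+1)$-vertex induced subhypergraph $G[S\cup\{x\}]$, which is clique friendly by the hereditary property, and note that $lk_{G[S\cup\{x\}]}(x)$ is the complete $(k-1)$-uniform hypergraph on $S$, because every $(k-1)$-subset of $S\subseteq K$ is an edge of $lk_G(x)$. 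Theorem \ref{CFwtetra} (applied with the singleton $\{x\}$, legitimate since $1<k-1$) then forces $G[S\cup\{x\}]$ to be a clique, so $S\in E(G)$.

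With the single-vertex case in hand I would assemble the general statement by iteration. Writing $X=\{x_1\ll x_m\}$ with $m=|X|<k-1$, I set $G_i=lk_G(\{x_1\ll x_i\})$, each clique friendly by repeated use of the link-stability property. Since $K$ is a clique of $lk_{G_{m-1}}(x_m)$, the single-vertex case gives that $K\cup\{x_m\}$ is a clique of $G_{m-1}$; feeding this into $lk_{G_{m-2}}(x_{m-1})$ yields that $K\cup\{x_{m-1},x_m\}$ is a clique of $G_{m-2}$, and after $m$ steps I obtain that $K\cup X$ is a clique of $G$. The hypothesis $|X|<k-1$ is exactly what guarantees that every intermediate link $G_i$ has uniformity at least $3$, which is needed so that the singleton in each invocation of \ref{CFwtetra} satisfies the size constraint. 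For the consequence I would simply take $K$ to be a maximum clique of $lk_G(X)$; as $K\subseteq V(G)\setminus X$ is disjoint from $X$ and $K\cup X$ is a clique of $G$, we get $\omega(lk_G(X))+|X|=|K\cup X|\le\omega(G)$, which is the claimed bound.

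The only place where real work happens is the single-vertex case, and within it the subcase $x\notin S$, where one has to manufacture the right $(k+1)$-vertex configuration whose vertex-link is complete so that \ref{CFwtetra} applies; the rest is bookkeeping. The secondary point I would be careful to get right is the preservation of clique-friendliness under links, since it is this fact (rather than any hereditary property of $C_\omega$-perfectness) that justifies the peeling argument in the clique-friendly setting.
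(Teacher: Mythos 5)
Your proof is correct and follows essentially the same route as the paper's: induction on $|X|$, peeling off one vertex at a time, with the single-vertex case settled by Theorem \ref{CFwtetra} and the bound on $\omega$ read off at the end. Your one real addition is the explicit lemma that clique-friendliness is inherited by links, which the paper's inductive step (applying the theorem to $lk_G(x)$) uses tacitly without proof, together with your unwinding of \ref{CFwtetra} via the induced subhypergraphs $G[S\cup\{x\}]$ --- a worthwhile clarification, since \ref{CFwtetra} as stated applies only to hypergraphs on exactly $k+1$ vertices.
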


\begin{proof}

First we prove the first statement.
  Induction on $|X|$.
  Let $K$ be a clique in $lk_G(X)$. 
  Assume first that $X=\{x\}$. By \ref{CFwtetra}, $K \cup \{x\}$ is a clique of
  $G$, as required.

  Now let $x \in X$; write $X'=X \setminus \{x\}$.
  Then $K$ is a clique in $lk_G(x)(X')$, and so
  inductively $K \cup X'$ is a clique in $lk_G(x)$.
  But   also, as we have seen in the base case of the induction,
  $(K \cup X') \cup \{x\}$ is a clique in $G$, as required. This proves the
  first  statement.

  To prove the second statement, let $K$ be a clique in $lk_G(X)$ of size
  $\omega(lk_G(X)$. We have proved that $K \cup X$ is a clique of $G$,
  and consequently $\omega(G) \geq \omega(lk_G(X))+|X|$.

  This proves \ref{friendlycliques}.
\end{proof}

Now \ref {tetra} and \ref{friendlycliques} imply:

\begin{theorem} \label{smallerclique}
  If $G$ is $C_\omega$-perfect then for every $X \subseteq V(G)$ with $|X|<k-1$, $\omega(lk_G(X)) \leq \omega(G)-|X|$.
\end{theorem}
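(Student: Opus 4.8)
The plan is to deduce the inequality from clique friendliness alone. Let $W\subseteq V(G)\setminus X$ be a maximum clique of $lk_G(X)$, so that $|W|=\omega(lk_G(X))$ and $X\cup Y\in E(G)$ for every $(k-|X|)$-subset $Y$ of $W$. I claim $G[X\cup W]$ is a complete $k$-uniform hypergraph; granting this, $\omega(G)\ge |X\cup W|=|X|+\omega(lk_G(X))$, which is exactly the assertion. (One might instead try to use $C_\omega$-perfectness to properly colour the complete link $lk_{G[X\cup W]}(X)=K_{|W|}^{\,k-|X|}$ with few colours, but for $k-|X|\ge 3$ this complete link has chromatic number far too small to force the bound --- e.g.\ $K_5^3$ admits a proper $2$-colouring --- so I route the argument through clique friendliness, which holds by Theorem~\ref{tetra}.)

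To show $G[X\cup W]$ is complete I would prove that every $k$-subset $Z\subseteq X\cup W$ is an edge, by downward induction on $r:=|Z\cap X|$. The base case $r=|X|$ is exactly the defining property of $W$: then $Z=X\cup Y$ with $Y$ a $(k-|X|)$-subset of $W$. The hard part is the case $r<|X|$: such a $Z$ omits at least one vertex of $X$, so it is not an extension of $X$ and the link condition says nothing about it. This is the main obstacle, and it is what forces the use of hypergraph (rather than purely colouring) structure.

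For the inductive step, fix $Z$ with $|Z\cap X|=r<|X|$, set $A=Z\cap X$ and $B=Z\cap W$ (so $|B|=k-r$), pick any $x_0\in X\setminus A$, and consider the $(k+1)$-set $S=Z\cup\{x_0\}$. For each $b\in B$ the $k$-set $S\setminus\{b\}$ meets $X$ in $r+1$ vertices, hence is an edge by the induction hypothesis, giving $|B|=k-r$ edges among the $k$-subsets of $S$. Since $|X|<k-1$ we have $r\le |X|-1\le k-3$, so $k-r\ge 3$; thus $S$ contains at least three edges, and clique friendliness excludes the ``at most two edges'' alternative and forces all $k+1$ of the $k$-subsets of $S$ to be edges. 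In particular $Z=S\setminus\{x_0\}$ is an edge, which completes the induction.

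The only remaining bookkeeping concerns small cliques. If $\omega(lk_G(X))=k-|X|-1$, the smallest possible value (a vacuous clique), the bound reads $\omega(G)\ge k-1$, which holds because every $(k-1)$-set is a vacuous clique; and if $\omega(lk_G(X))=k-|X|$ then $X\cup W$ is a single $k$-set, handled by the base case. In all other cases $|X\cup W|\ge k+1$ and the induction above runs. The whole argument hinges on the single inequality $k-r\ge 3$, which is precisely what keeps clique friendliness applicable at every step.
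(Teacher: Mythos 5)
Your proof is correct, but it takes a genuinely different route from the paper's. The paper proves \ref{smallerclique} by induction on $|X|$: the base case $X=\{x\}$ lifts a clique of $lk_G(x)$ to a clique of $G$ via the $(k+1)$-vertex lemma \ref{tetra}, and the inductive step peels one vertex $x$ off $X$ and applies the statement to the link hypergraph $lk_G(x)$ (the same scheme reappears in \ref{friendlycliques}, via \ref{CFwtetra}). You instead fix a maximum clique $W$ of $lk_G(X)$ and prove the stronger assertion that $G[X\cup W]$ is complete --- which is in fact the first statement of the paper's \ref{friendlycliques} --- by downward induction on $r=|Z\cap X|$, applying clique friendliness directly to the $(k+1)$-sets $S=Z\cup\{x_0\}$; your inequality $k-r\ge 3$, extracted precisely from the hypothesis $|X|<k-1$, guarantees at least three edges inside $S$ and hence all $k+1$. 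What your version buys: the whole argument stays inside the single hypergraph $G$, so you never need links of $C_\omega$-perfect (or clique-friendly) hypergraphs to inherit the relevant property --- a point the paper's induction passes over silently when it invokes the inductive hypothesis for $lk_G(x)$, which is $(k-1)$-uniform and not shown to be $C_\omega$-perfect (heredity does hold for clique friendliness, so the paper's argument is repairable, but your route avoids the issue entirely). What the paper's version buys is brevity: it reuses the one-vertex lemma as a black box and the identical induction serves both \ref{smallerclique} and \ref{friendlycliques}. Your bookkeeping of the degenerate cases $\omega(lk_G(X))\in\{k-|X|-1,\ k-|X|\}$ is fine; the only case left tacit is $|V(G)|<k-1$, where $X\cup W=V(G)$ is vacuously a clique and the bound is immediate.
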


\subsection {Berge is perfect}


A  $k$-uniform hypergraph is {\it Berge} if
for every induced subhypergraph $G'$ of $G$ we have:

\begin{itemize}
\item If  $X \subseteq  V(G')$ with $|X|=k-2$, then  
there is a proper $(\omega(G')-k+2)$-coloring of $G'$ (so $(k-1)$-tuples are
colored) that
restricts to a proper $(\omega(G')-k+2)$-coloring of $lk_{G'}(X)$ (so
$(k-|X|-1)$-tuples are colored).
\end{itemize}

\begin {theorem}
\label {t:berge}
A $k$-uniform hypergraph $G$  is Berge iff it is $C_\omega$-perfect.
\end {theorem}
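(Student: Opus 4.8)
The forward direction is immediate, since the Berge condition is literally the $C_\omega$-perfect condition restricted to the sets $X$ with $|X|=k-2$; so a $C_\omega$-perfect hypergraph is Berge. All the content is in the converse, where the coloring guarantee must be upgraded from sets of size $k-2$ to all $X$ with $|X|<k-1$. For an induced subhypergraph $G'$ and $X\subseteq V(G')$ write $m=\omega(G')-k+2$, and write $P(G',X)$ for the assertion that $G'$ has a proper $m$-coloring whose restriction to $lk_{G'}(X)$ is proper; this is exactly the clause appearing in the definition of $C_\omega$-perfect. The plan is to reduce $P(G',X)$ to two much weaker ``a single proper coloring exists'' statements via a gluing principle, and then to produce those colorings by induction.

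The first gluing principle is this: $P(G',X)$ follows from (a) the existence of some proper $m$-coloring $c_1$ of $G'$, together with (b) the existence of some proper $m$-coloring $c_0$ of the link $lk_{G'}(X)$, regarded as a $(k-j)$-uniform hypergraph on $V(G')\setminus X$ (here $j=|X|$). To see this, color each $(k-1)$-tuple $S$ by $c_0(S\setminus X)$ when $X\subseteq S$ and by $c_1(S)$ otherwise. The key point, which is exactly what defeats a naive vertex-by-vertex gluing, is that every edge $Z$ either contains $X$ or not, and in each case the $(k-1)$-subsets that one uses to witness non-monochromaticity all come from a single one of $c_0,c_1$: if $X\subseteq Z$ then the $k-j\ge 2$ subsets $Z\setminus\{y\}$ with $y\in Z\setminus X$ all contain $X$, are colored by $c_0$, and are not all equal because $c_0$ is proper on the link-edge $Z\setminus X$; if $X\not\subseteq Z$ then no subset of $Z$ contains $X$, so all are colored by the proper coloring $c_1$. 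Hence the glued coloring is proper on $G'$, and by construction it restricts to $c_0$ on $lk_{G'}(X)$, which is proper. Statement (a) is free: applying the Berge property to $G'$ with any $(k-2)$-subset yields in particular a proper $m$-coloring of $G'$.

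So everything reduces to (b): for every induced $G'$ and every $X$ with $1\le|X|\le k-2$, the link $lk_{G'}(X)$ admits a proper coloring with at most $\omega(G')-k+2$ colors. This is the main obstacle. I would prove it by downward induction on $j=|X|$. The base case $j=k-2$ is immediate: then $lk_{G'}(X)$ is a graph, and the Berge property applied to $(G',X)$ produces precisely such a proper (vertex) coloring. For the inductive step ($j\le k-3$, so $lk_{G'}(X)$ is $p$-uniform with $p=k-j\ge 3$) I would run a second gluing, now inside the link and with the same no-mixing structure, together with an auxiliary induction on $|V(G')|$. Namely, pick $v\in V(G')\setminus X$ and color each $(k-j-1)$-tuple $T$ of $V(G')\setminus X$ using a proper coloring of the vertex-link $lk_{lk_{G'}(X)}(v)=lk_{G'}(X\cup\{v\})$ when $v\in T$ and a proper coloring of the deletion $lk_{G'}(X)\setminus v=lk_{G'\setminus v}(X)$ when $v\notin T$. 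The first coloring exists by the outer induction hypothesis at $j+1$, the second by the inner induction hypothesis since $G'\setminus v$ has fewer vertices; both use at most $\omega(G')-k+2$ colors, drawn from a common palette. For an edge $Y$ of the link containing $v$ the $p-1\ge 2$ subsets $Y\setminus\{u\}$ with $u\ne v$ all contain $v$ and are governed by the (proper) vertex-link coloring, while for an edge avoiding $v$ all subsets are governed by the (proper) deletion coloring; so again no mixing occurs and the result is a proper coloring of $lk_{G'}(X)$ with at most $\omega(G')-k+2$ colors, completing the step.

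The heart of the argument, and the point I expect to be most delicate to get right, is this no-mixing phenomenon: properness is a global constraint, and pasting two proper colorings along an arbitrary split fails because an edge can have its unique ``distinguishing'' $(k-1)$-subset on the wrong side. Splitting instead by containment of a fixed set $X$, respectively of a fixed vertex $v$, is exactly what forces every relevant edge to have at least two decisive subsets lying entirely on one side. The only bookkeeping to check is that the color budgets are consistent, which they are: every auxiliary coloring is built with at most $\omega(G')-k+2$ colors and a single $m$-element palette is reused throughout. (One may note here that Berge hypergraphs are clique friendly — otherwise some induced $G'$ on $k+1$ vertices would have a $(k-2)$-set whose link is a triangle, which no proper $2$-coloring of $G'$ could properly restrict to, contradicting Berge — so \ref{friendlycliques} gives $\omega(lk_{G'}(X))\le\omega(G')-|X|$ and the budgets even match the intrinsic ones; but this is not needed for the construction.) Assembling the pieces — (b) by the double induction, (a) from Berge, and then the first gluing — gives $P(G',X)$ for every $X$ with $|X|<k-1$, that is, $G'$ is $C_\omega$-perfect.
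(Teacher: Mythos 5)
Your proof is correct, and it takes a genuinely different route from the paper's. The paper's argument is a single one-shot construction: order $V(G')$ as $v_1,\ldots,v_n$ with $X$ initial, invoke Bergeness once for each $(k-2)$-set $Y$ to obtain a coloring $c_Y$ of $G'$ that restricts properly to $lk_{G'}(Y)$, and color each $(k-1)$-tuple $Z$ by $c_Y(Z\setminus Y)$ where $Y$ is the initial $(k-2)$-segment of $Z$. An edge $F$ is then non-monochromatic because the two $(k-1)$-subsets obtained by deleting one of its two \emph{last} vertices share the same initial segment $Y$ and are separated by $c_Y$, and the restriction to $lk_{G'}(X)$ is proper precisely because $X$ sits at the front of the order. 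You replace this closed formula by a modular scheme: a gluing lemma splitting tuples by containment of $X$ (respectively of a vertex $v$), plus a double induction (downward on $|X|$, inner on $|V(G')|$) proving the intermediate statement that in a Berge hypergraph every link $lk_{G'}(X)$ admits a proper $(\omega(G')-k+2)$-coloring. Both proofs pivot on the same phenomenon you isolate as ``no mixing'' --- every edge has at least two decisive $(k-1)$-subsets lying on one side of the split; in the paper this appears as the shared-initial-segment trick, in yours as the containment split, and your recursion, once unwound, produces a coloring of much the same flavor. The trade-off: the paper uses only colorings of the one hypergraph $G'$ under consideration and needs no induction, while you invoke Bergeness on many vertex-deleted subhypergraphs $G'\setminus S$ (harmless, since Berge is hereditary by definition) but obtain in exchange a reusable gluing lemma and the explicit link-colorability statement, which the paper leaves implicit. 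Your bookkeeping is right ($\omega(G'\setminus v)-k+2\le\omega(G')-k+2$, one common palette throughout; the case $X=\emptyset$ is just your statement (a)), and your parenthetical that the $|X|=k-2$ case of Bergeness already forces clique-friendliness, via a triangle link as in the proof of \ref{tetra}, is correct though, as you note, not needed.
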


\begin {proof}


Clearly if $G$ is $C_\omega$-perfect, then $G$ is Berge.
 Thus we assume that $G$ is Berge and show that $G$ is $C_\omega$-perfect.
  Since being Berge and being $C_\omega$-perfect are both closed under taking
  induced subhypergraphs, it is enough to prove that:

  \begin{itemize}
\item
If  $X \subseteq  V(G)$ with $|X| \leq k-2$, then  
there is a proper $(\omega(G)-k+2)$-coloring of $G$ (so $(k-1)$-tuples are
colored) that
restricts to a proper $(\omega(G)-k+2)$-coloring of $lk_{G}(X)$ (so
$(k-|X|-1)$-tuples are colored).
  \end{itemize}
  
Let $X \subseteq V(G)$. 
If $|X|=k-2$, the statement follows immediately from the first bullet in the
definition of Bergeness. So we may assume that $|X|<k-2$.

Order the vertices of $V(G)$ as $v_1, \ldots, v_n$ so that
$X=\{v_1, \ldots v_{|X|}\}$.
Let $Y \subseteq V(G)$ with $|Y|=k-2$ 
and let $i$ be maximum such that
  $v_i \in Y$.
  Let $R(Y)=lk_G(Y)[v_{i+1}, \ldots, v_n]$.
Then $R(Y)$ is a graph.
  Since $|Y|=k-2$,  the first bullet of the definition of Bergeness implies that
there is a proper $(\omega(G)-k+2)$-coloring $c_Y$ of $G$ (so $(k-1)$-tuples are
colored) that
restricts to a proper $(\omega(G)-k+2)$-coloring of $lk_{G}(Y)$ (so
vertices are colored). Since $R(Y)$ is a subgraph of
$lk_G(Y)$, the coloring $c_Y$ is also a coloring of $R(Y)$.

Let $Z$ be a ($k-1$)-tuple of vertices of $G$, and let $Y$ be the $(k-2)$-initial
segment of $Z$. Define $c(Z)=c_Y(Z\setminus Y )$.

We show that $c$ is proper.
Let $F \in E(G)$, and let  $Y$ be the initial $(k-2)$-segment of $F$.
Then $F \setminus Y$ is an edge $ab$, say,  of $R(Y)$ and so
$c_Y(a) \neq c_Y(b)$. But now $c(Y \cup \{a\}) \neq c(Y \cup \{b\})$, and
so $F$ is not a monochromatic $K_k^{k-1}$ in $c$. 

Next we show that  $c$ restricts to a proper coloring of $lk_G(X)$.
Let  $F \in E(lk_G(X))$, let $Z$ be the initial $(k-2-|X|)$-segment of
$F$, and let $Y=X \cup Z$.
Then $|F \setminus Z|=2$, say $F \setminus Z=\{a,b\}$. Since
  $X=\{v_1, \ldots v_{|X|}\}$,
  it follows that $c(Z \cup \{a\})=c_Y(a)$ and $c(Z \cup \{b\})=c_Y(b)$.
  But $ab$ is an edge of $R(Y)$ and so
$c_Y(a) \neq c_Y(b)$.  Consequently,  $c(Z \cup \{a\}) \neq c(Z \cup \{b\})$, and
therefore $F$ is not a monochromatic $K_{k-|X|}^{k-1-|X|}$ in $c$ when viewed as a
coloring of $lk_G(X)$.  This proves that $G$ is $C_\omega$-perfect.

\end {proof}
We now prove:

\begin {theorem}
  \label {t:main}
An $H_\omega$-perfect hypergraph is $C_\omega$-perfect.
\end {theorem}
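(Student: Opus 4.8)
The plan is to route everything through the equivalence already proved in Theorem~\ref{t:berge}: since Bergeness and $C_\omega$-perfectness coincide, it suffices to show that an $H_\omega$-perfect hypergraph $G$ is Berge. Because every induced subhypergraph of an $H_\omega$-perfect hypergraph is again $H_\omega$-perfect (a link of a $(k-2)$-set in $G[W]$ is an induced subgraph of the corresponding perfect link in $G$, hence perfect, while clique-friendliness is inherited trivially), it is enough to verify, for $G$ itself and an arbitrary fixed $(k-2)$-set $X\subseteq V(G)$, that there is a proper $(\omega(G)-k+2)$-coloring of $G$ that restricts to a proper $(\omega(G)-k+2)$-coloring of $lk_G(X)$. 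Note that $lk_G(X)$ is a graph, and $H$-perfectness makes it perfect.

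The construction colors each link of a $(k-2)$-set separately and then glues. For every $(k-2)$-subset $Y\subseteq V(G)$ the graph $lk_G(Y)$ is perfect, so $\chi(lk_G(Y))=\omega(lk_G(Y))$; and since $G$ is clique friendly, Theorem~\ref{friendlycliques} gives $\omega(lk_G(Y))\le \omega(G)-k+2$. Hence I may fix, for each such $Y$, a proper vertex-coloring $c_Y$ of $lk_G(Y)$ using colors from $\{1,\dots,\omega(G)-k+2\}$. Now order $V(G)$ as $v_1<\cdots<v_n$ with $X=\{v_1,\dots,v_{k-2}\}$ an initial segment. For a $(k-1)$-tuple $S=\{w_1<\cdots<w_{k-1}\}$ let $Y(S)=\{w_1,\dots,w_{k-2}\}$ and define $c(S)=c_{Y(S)}(w_{k-1})$. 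This coloring of the $(k-1)$-tuples uses at most $\omega(G)-k+2$ colors, the required palette size.

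It then remains to check the two properties. For the restriction, if $v\in V(G)\setminus X$ then $X\cup\{v\}$ has initial $(k-2)$-segment exactly $X$ and last element $v$, so $c(X\cup\{v\})=c_X(v)$; thus $c$ restricts on $lk_G(X)$ to the proper coloring $c_X$. For properness on $G$ — which is the step I expect to be the real obstacle, since a priori the independently chosen $c_Y$ could conspire to make some edge monochromatic — the decisive observation is that the two largest vertices of any edge, together with their common initial segment, form an edge of a single link graph. Concretely, let $F=\{w_1<\cdots<w_k\}\in E(G)$ and put $Y=\{w_1,\dots,w_{k-2}\}$; then both $F\setminus\{w_k\}$ and $F\setminus\{w_{k-1}\}$ have initial $(k-2)$-segment $Y$, so $c(F\setminus\{w_k\})=c_Y(w_{k-1})$ and $c(F\setminus\{w_{k-1}\})=c_Y(w_k)$. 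Because $Y\cup\{w_{k-1},w_k\}=F\in E(G)$, the pair $\{w_{k-1},w_k\}$ is an edge of $lk_G(Y)$, so $c_Y(w_{k-1})\ne c_Y(w_k)$; these two $(k-1)$-subsets of $F$ get distinct colors and $F$ is not a monochromatic $K_k^{k-1}$. This establishes that $G$ is Berge, and Theorem~\ref{t:berge} then yields $C_\omega$-perfectness.
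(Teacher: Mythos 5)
Your proof is correct, and it follows the paper's overall strategy: reduce to the hereditary $|X|=k-2$ case, use $H$-perfectness to color the link graph, use \ref{friendlycliques} to bound the palette by $\omega(G)-k+2$, and finish via \ref{t:berge}. But your execution of the coloring step genuinely differs from the paper's, and in a way that matters. The paper's proof of \ref{t:main} takes a \emph{single} proper coloring $c$ of $lk_{G'}(X)$ and declares the color of every $(k-1)$-tuple $Z$ to be $c(Z\setminus X)$ --- an expression that is only well-defined when $X\subseteq Z$, and which cannot in general be repaired using $c$ alone: an edge $F$ disjoint from $X$ may have all its vertices pairwise nonadjacent in $lk_{G'}(X)$ and hence identically colored by $c$, so any extension of the form ``$c$ of a designated vertex of $Z\setminus X$'' can leave $F$ monochromatic (e.g., $k=3$, $X=\{x\}$ with $x$ in no edge, $F=\{a,b,c\}$ an edge). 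You correctly identified this as ``the real obstacle'' and solved it by fixing an ordering with $X$ initial and gluing one coloring $c_Y$ per $(k-2)$-set $Y$, coloring each tuple via its initial $(k-2)$-segment --- which is precisely the mechanism the paper deploys in its proof of \ref{t:berge}, imported one level down. The trade-off: the paper's argument is shorter but, as written, underspecifies the whole-hypergraph coloring demanded by the Berge condition; yours duplicates some of the ordering/gluing machinery of \ref{t:berge} but makes the verification of Bergeness airtight (properness on every edge, not just edges containing $X$, plus the restriction to $lk_G(X)$). All the supporting steps you use --- heredity of $H_\omega$-perfectness under induced subhypergraphs, and the bound $\omega(lk_G(Y))\le\omega(G)-k+2$ from clique friendliness --- are sound.
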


  \begin{proof}

    Let $G$ be an $H_\omega$-perfect $k$-uniform hypergraph. 
    By \ref{t:berge} it is enough to prove that  $G$ is Berge.
    We need to show that for every induced subhypergraph $G'$ of
$G$ we have
    \begin{itemize}
\item
If  $X \subseteq  V(G')$ with $|X|=k-2$, then  
there is a proper $(\omega(G')-k+2)$-coloring of $G'$ (so $(k-1)$-tuples are
colored) that
restricts to a proper $(\omega(G')-k+2)$-coloring of $lk_{G'}(X)$ (so
$(k-|X|-1)$-tuples are colored).
\end{itemize}

    Let $G'$ be an induced subhypergraph of $G$, and let $X \subseteq V(G')$ with
    $|X|=k-2$.    
      Order the vertices of $V(G)$ as $v_1, \ldots, v_n$ so that
$X=\{v_1, \ldots v_{|X|}\}$.
Let $Y \subseteq V(G')$ with $|Y|=k-2$ 
and let $i$ be maximum such that
  $v_i \in Y$.
  Let $R(Y)=lk_{G'}(Y)[v_{i+1}, \ldots, v_n]$.
  Then $R(Y)$ is a graph.
  Moreover, $lk_{G'}(Y)$ is an induced subgraph of $lk_G(Y)$,
  and therefore $R(Y)$ is an induced subgraph of 
$lk_G(Y)[v_{i+1}, \ldots, v_n]$.
  Since $|Y|=k-2$ and  $G$ is $H_\omega$-perfect (and therefore $H$-perfect),
  we have that $lk_G(Y)$ is a perfect graph.  It follows that $R(Y)$ is a
  perfect graph.   Consequently, $R(Y)$ has a proper
  $\omega(R(Y)) \leq \omega(lk_{G'}(Y))$-coloring
  $c_Y$.   Since $G$ is $H_\omega$-perfect (and
     therefore
     clique friendly) it follows that $G'$ is clique-friendly, and so
      \ref{friendlycliques} implies 
     that
     $\omega(lk_{G'}(Y))\le \omega(G')-k+2$.

Let $Z$ be a ($k-1$)-tuple of vertices of $G'$, and let $Y$ be the $(k-2)$-initial
segment of $Z$. Define $c(Z)=c_Y(Z\setminus Y )$.

We show that $c$ is proper.
Let $F \in E(G')$, and let  $Y$ be the initial $(k-2)$-segment of $F$.
Then $F \setminus Y$ is an edge $ab$, say,  of $R(Y)$ and so
$c_Y(a) \neq c_Y(b)$. But now $c(Y \cup \{a\}) \neq c(Y \cup \{b\})$, and
so $F$ is not a monochromatic $K_k^{k-1}$ in $c$. 

Since $lk_{G'}(X)=R(X)$,  and the restriction of $c$ to $R(X)$ is $c_X$,
it follows that $c$ restricts to a proper coloring of $lk_{G'}(X)$.
This proves
that $G$ is  Berge. Now \ref{t:main} follows from \ref{t:berge}.
\end{proof}

To summarize we have:
\begin{theorem}
  \label{summary}
  $H_{\omega}$-perfect $\rightarrow$ Berge $\rightarrow C_{\omega}$-perfect,
  and $C_{\omega}$-perfect $\rightarrow$ Berge.
\end{theorem}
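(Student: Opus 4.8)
The plan is to recognize that this theorem is purely a compilation of implications already established, so the proof will consist of assembling \ref{t:berge} and \ref{t:main} together with the single trivial implication that follows immediately from the definitions. No new combinatorial argument is needed; the task is simply to record the logical shape of the three arrows and to cite the results that supply them.

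First I would dispose of the trivial arrow $C_\omega$-perfect $\rightarrow$ Berge. The defining condition for Bergeness is exactly the $C_\omega$-perfectness condition specialized to sets $X$ with $|X|=k-2$, so every $C_\omega$-perfect hypergraph is Berge at once. This is precisely the observation already made at the opening of the proof of \ref{t:berge}, so I would just reuse it. For the reverse arrow Berge $\rightarrow C_\omega$-perfect, I would invoke the substantive direction of \ref{t:berge}, which asserts that a Berge hypergraph is $C_\omega$-perfect. Together these two give the equivalence Berge $\leftrightarrow C_\omega$-perfect, covering two of the three claimed arrows.

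Finally, for $H_\omega$-perfect $\rightarrow$ Berge, I would combine two facts already in hand: \ref{t:main} supplies $H_\omega$-perfect $\rightarrow C_\omega$-perfect, and the trivial arrow above supplies $C_\omega$-perfect $\rightarrow$ Berge, so composing them yields $H_\omega$-perfect $\rightarrow$ Berge. (Alternatively, one can read this arrow off directly from the body of the proof of \ref{t:main}, where Bergeness is in fact established as the intermediate step before \ref{t:berge} is applied to conclude $C_\omega$-perfectness.)

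I expect no genuine obstacle here, since each arrow reduces either to a cited theorem or to a one-line unwinding of definitions. The only point requiring a little care is to avoid re-proving \ref{t:berge} in the course of this summary: I would treat it strictly as a black box and let the equivalence it provides carry the weight, so that the summary remains a short bookkeeping statement rather than a repetition of earlier work.
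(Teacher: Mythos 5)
Your proposal is correct and matches the paper exactly: the paper presents this theorem as a summary with no separate proof, the content being precisely \ref{t:berge} (which gives both Berge $\rightarrow C_\omega$-perfect and the trivial converse) together with \ref{t:main}, whose proof indeed establishes Bergeness directly before invoking \ref{t:berge}. Treating the earlier results as black boxes and assembling the arrows, as you do, is all that is required.
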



\subsection{Proper coloring and Ramsey numbers}

A well-known equivalent definition \cite {Lov72} of perfect graphs (at the heart of the alternative proof of the Weak Perfect Graph
Theorem given in \cite{Gas96}) is the
following key fact:
\begin{theorem}
\label{Gasp}
A graph $G$ is {\em perfect} if and only if for every induced subgraph $G'$ of
$G$ we have that $\alpha(G')\omega(G') \geq |V(G')|$.
\end{theorem}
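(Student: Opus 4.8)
The plan is to prove the two directions separately: the forward implication (perfect $\Rightarrow$ the inequality) by a one-line coloring count, and the reverse implication by the linear-algebraic argument of Gasparian applied to a minimally imperfect induced subgraph.

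For the forward direction, I would use that every induced subgraph $G'$ of a perfect graph is itself perfect, so $\chi(G')=\omega(G')$. A proper coloring of $G'$ with $\omega(G')$ colors partitions $V(G')$ into $\omega(G')$ stable sets, each of size at most $\alpha(G')$, and summing their sizes gives $|V(G')|\le\alpha(G')\omega(G')$. This is the routine half.

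For the reverse direction I would argue the contrapositive: if $G$ is not perfect, choose an induced subgraph $H$ that is \emph{minimally imperfect} (not perfect, but every proper induced subgraph is perfect) and show $|V(H)|>\alpha(H)\omega(H)$, so that $H$ itself violates the inequality. Write $n=|V(H)|$, $\omega=\omega(H)$, $\alpha=\alpha(H)$. Minimality yields two preliminary facts. First, deleting any vertex gives a perfect graph, so $\chi(H-v)=\omega(H-v)\le\omega$; since adding one vertex raises the chromatic number by at most one while $H$ is not $\omega$-colorable, $\chi(H)=\omega+1$. Second, if some vertex $v$ lay in every maximum clique then $H-v$ would be $(\omega-1)$-colorable and hence $H$ would be $\omega$-colorable, a contradiction, so $\omega(H-v)=\omega$ for every $v$. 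I would then build a family of $\alpha\omega+1$ stable sets: a maximum stable set $A_0$ of size $\alpha$, together with, for each $v\in A_0$, the $\omega$ (necessarily nonempty) color classes of a proper $\omega$-coloring of $H-v$. A short double count shows every vertex of $H$ lies in exactly $\alpha$ of these stable sets, so for any maximum clique $K$ we have $\sum_i|K\cap A_i|=\alpha\omega$, while each term is at most one and there are $\alpha\omega+1$ sets; hence $K$ meets all but exactly one $A_i$, and meets each of the others in exactly one vertex. The remaining ingredient, which I expect to be the main obstacle, is to produce for each index $i$ a maximum clique \emph{avoiding} $A_i$: here imperfection is essential, since $H-A_i$ is a proper induced subgraph, hence perfect, so if it had clique number below $\omega$ it would be $(\omega-1)$-colorable, forcing an $\omega$-coloring of $H$ and contradicting $\chi(H)=\omega+1$; therefore $\omega(H-A_i)=\omega$ and a maximum clique $K_i$ disjoint from $A_i$ exists.

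Finally I would finish with linear algebra. Collecting cliques $K_0,\ldots,K_{\alpha\omega}$, the incidence matrix $M$ with $M_{ij}=|A_i\cap K_j|$ equals $J-I$ of order $\alpha\omega+1$, which is nonsingular and so has rank $\alpha\omega+1$. But $M$ factors as the product of the $(\alpha\omega+1)\times n$ matrix of characteristic vectors of the $A_i$ with the $n\times(\alpha\omega+1)$ matrix of characteristic vectors of the $K_j$, so its rank is at most $n$. Hence $\alpha\omega+1\le n$, giving $\alpha(H)\omega(H)<|V(H)|$ and the desired contradiction. The one delicate point throughout is ensuring the color classes are genuinely nonempty so that the counting is exact, which the fact $\omega(H-v)=\omega$ secures.
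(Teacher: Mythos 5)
Your proof is correct and it is precisely the argument the paper relies on: the paper states this theorem without proof, citing Gasparian \cite{Gas96}, and your reverse direction is exactly Gasparian's linear-algebraic proof (minimally imperfect $H$, the $\alpha\omega+1$ stable sets, the $J-I$ incidence matrix, and the rank bound), with the delicate points — nonemptiness of the color classes via $\omega(H-v)=\omega$, and the existence of a maximum clique avoiding each $A_i$ via perfection of $H-A_i$ — all handled correctly. Nothing to fix.
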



We mention yet another related notion of perfect hypergraphs.
Denote by $R_s(k)$ the minimum integer such that for every $n \geq R_s(k)$ in
every $s$-coloring of $K^{k-1}_n$ there is a monochromatic $K^{k-1}_k$.
Then $R_s(2)=s+1$, and $G$ is perfect if
$|V(G')| < R_{\alpha(G') \omega(G')}(2)$
for every induced subgraph $G'$ of $G$.

{\bf Definition of $R$-perfect}:
A $k$ uniform hypergraph $G$ is {\it $R$-perfect} if $|V(G)|<R_{(\alpha(G)-k+2)(\omega(G)-k+2)}(k)$,
and this property holds for all induced subhypergraphs and links.

\begin{theorem} \label{perfectR}
  Let $G$ be a $k$-uniform hypergraph. If
  \begin {itemize}
    \item[[PC]]
  $G$ admits a proper coloring with $\omega(G)-k+2$ colors
  and its complement $G^c$ admits a proper coloring
  with $\alpha (G)-k+2$ coloring 
  \end {itemize}
  then 
\begin {equation}
  \label {e:ram}
  |V(G)|<R_{(\alpha(G)-k+2)(\omega(G)-k+2)}(k).
\end {equation}  

\end{theorem}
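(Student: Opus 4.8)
The plan is to manufacture, out of the two colorings promised by hypothesis [PC], a single coloring of the complete $(k-1)$-uniform hypergraph $K_n^{k-1}$ on the vertex set $V(G)$, where $n=|V(G)|$, and to show that this coloring admits no monochromatic $K_k^{k-1}$. Once such a coloring is exhibited, the bound \eqref{e:ram} falls out immediately from the definition of $R_s(k)$ as the threshold beyond which every coloring is forced to contain a monochromatic simplex. The construction I have in mind is simply the \emph{product} of the two given colorings.

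Concretely, write $a=\omega(G)-k+2$ and $b=\alpha(G)-k+2$. By [PC], fix a proper coloring $c_1$ of $G$ using $a$ colors and a proper coloring $c_2$ of $G^C$ using $b$ colors; by definition each of $c_1,c_2$ is a coloring of the $(k-1)$-subsets of $V(G)$. Define a coloring $c$ of every $(k-1)$-subset $S$ of $V(G)$ by $c(S)=(c_1(S),c_2(S))$. This uses at most $ab=(\alpha(G)-k+2)(\omega(G)-k+2)$ colors, so $c$ is an $ab$-coloring of $K_n^{k-1}$.

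The key step is to check that $c$ has no monochromatic $K_k^{k-1}$. Suppose for contradiction that some $k$-subset $F$ of $V(G)$ were such a simplex, i.e.\ all $(k-1)$-subsets of $F$ received the same value under $c$. By the definition of $c$ this means all $(k-1)$-subsets of $F$ share a common $c_1$-color and share a common $c_2$-color, so $F$ is a monochromatic $K_k^{k-1}$ under each of $c_1$ and $c_2$ separately. But $F$ is a $k$-subset of $V(G)$, so by the definition of the complement exactly one of $F\in E(G)$ or $F\in E(G^C)$ holds. If $F\in E(G)$, then $F$ being monochromatic under $c_1$ contradicts properness of $c_1$ as a coloring of $G$; if $F\in E(G^C)$, then $F$ being monochromatic under $c_2$ contradicts properness of $c_2$ as a coloring of $G^C$. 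In either case we reach a contradiction, so no such $F$ exists.

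Thus $c$ is an $ab$-coloring of $K_n^{k-1}$ with no monochromatic $K_k^{k-1}$. Since $R_{ab}(k)$ is by definition the least $N$ for which \emph{every} $ab$-coloring of $K_N^{k-1}$ contains a monochromatic $K_k^{k-1}$, the existence of $c$ forces $n=|V(G)|<R_{ab}(k)$, which is exactly \eqref{e:ram}. The whole argument is little more than bookkeeping once one notices that the product coloring is the right object; there is no serious obstacle, the only point deserving care being the dichotomy ``$F\in E(G)$ or $F\in E(G^C)$,'' which is precisely what allows each offending edge to be ruled out by whichever of the two properness conditions governs it.
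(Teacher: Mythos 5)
Your proposal is correct and is essentially identical to the paper's own proof: both take the product coloring $c(S)=(c_1(S),c_2(S))$ of the $(k-1)$-subsets and rule out a monochromatic $K_k^{k-1}$ via the dichotomy $F\in E(G)$ versus $F\in E(G^C)$, then invoke the definition of $R_s(k)$. Nothing further is needed.
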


  \begin{proof}
Let $c^G$ be the $(\omega(G)-k+2)$-coloring of $K^{k-1}_{[V(G)]}$ 
and let $c^{G^c}$ be the $(\alpha(G)-k+2)$-coloring of $K^{k-1}_{[V(G)]}$.
For every $(k-1)$-tuple $e$ let $c(e)=(c^G(e),c^{G^c}(e))$.
Then $c$ is a coloring of $K^{k-1}_{[V(G)]}$ with $(\alpha(G)-k+2)(\omega(G)-k+2)$ colors.

It remains to show that   there is no monochromatic $K^{k-1}_k$ in $c$.
  Suppose that $Y \subseteq V(G)$ is a monochromatic  $K^{k-1}_k$ in $c$.
  If  $Y \in E(G)$, then the coloring is not monochromatic in
  the first coordinate, and if 
  $Y \not \in E(G)$, then the coloring is not monochromatic
    in the second coordinate, a contradiction. This proves \ref{perfectR}.
\end{proof}
  The class of $k$-uniform hypergraphs described by property [PC] for all induced subhypergraphs (but not necessarily links),
  and the  class of $R$-perfect hypergraphs are both self-complementary classes that again, for $k=2$
  consist of the class of perfect graphs.






\subsection {A few examples of $H_\omega$-perfect hypergraphs}

In this subsection we list a few constructions that yield $H_\omega$-perfect $k$-uniform
hypergraphs.
First, a disjoint union of two $H_\omega$-perfect hypergraphs is $H_\omega$-perfect.
Simple 3-uniform hypergraphs are
$H_\omega$-perfect, and so are tripartite 3-uniform hypergraphs.
More generally $k$-uniform hypergraphs, in which no two edges share more than $k-2$ vertices, are $H_\omega$-perfect
and so are $k$-partite $k$-uniform hypergraphs.

A simple induction gives another natural family:

\begin {theorem}
  Let $G$ be a $H_\omega$- perfect $k$-uniform hypergraph, and let $H$ be the hypergraph of cliques
  of size $r$ ($r>k$) in $G$. Then
  $H$ is $H_\omega$-perfect.
  \end {theorem}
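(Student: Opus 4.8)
The plan is to reduce the statement to a single ``step‑up'' lemma and then iterate. Throughout, for a $q$-uniform hypergraph $F$ and an integer $s>q$, write $\mathrm{Cl}_s(F)$ for the $s$-uniform hypergraph on $V(F)$ whose edges are the $s$-subsets of $V(F)$ that form cliques of $F$; thus $H=\mathrm{Cl}_r(G)$, and the goal is to show that $\mathrm{Cl}_r(G)$ is clique friendly and $H$-perfect. The first thing I would record is the elementary \emph{telescoping} observation: for $s>q+1$, a set $S$ with $|S|\ge q+1$ is a clique of $F$ if and only if it is a clique of $\mathrm{Cl}_{q+1}(F)$, since every $q$-subset of such an $S$ lies in some $(q+1)$-subset, so ``all $q$-subsets are edges'' is equivalent to ``all $(q+1)$-subsets are cliques''. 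Hence $\mathrm{Cl}_s(F)=\mathrm{Cl}_s(\mathrm{Cl}_{q+1}(F))$. Setting $G^{(0)}=G$ and $G^{(i)}=\mathrm{Cl}_{k+i}(G^{(i-1)})$, iterating this identity gives $\mathrm{Cl}_r(G)=G^{(r-k)}$, so it suffices to prove the single step: if $F$ is a $p$-uniform $H_\omega$-perfect hypergraph then $\mathrm{Cl}_{p+1}(F)$ is $H_\omega$-perfect. Applying this step $r-k$ times then yields the theorem.

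For the single step I would verify the two halves of $H_\omega$-perfectness separately. For \emph{clique friendliness} of $\mathrm{Cl}_{p+1}(F)$, take a $(p+2)$-set $S$ containing three edges $S\setminus\{x_1\},S\setminus\{x_2\},S\setminus\{x_3\}$ (three $(p+1)$-cliques of $F$); I claim $S$ is a $(p+2)$-clique of $F$, which forces all $p+2$ facets to be edges. Indeed, for a $p$-subset $T\subseteq S$ omitting some $x_i$ we have $T\subseteq S\setminus\{x_i\}$, an edge; and in the remaining case $T\supseteq\{x_1,x_2,x_3\}$ I would choose $w\in S\setminus T$ and apply clique friendliness of $F$ (which holds since $F$ is $H_\omega$-perfect) to the $(p+1)$-set $T\cup\{w\}$, whose facets $(T\setminus\{x_i\})\cup\{w\}$ for $i=1,2,3$ are three edges; this forces $T\cup\{w\}$ to be a clique, hence $T$ an edge.

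The heart of the single step is \emph{$H$-perfectness} of $\mathrm{Cl}_{p+1}(F)$. Fixing a $(p-1)$-set $X$, I must show $lk_{\mathrm{Cl}_{p+1}(F)}(X)$ is perfect. Every vertex $a$ with $X\cup\{a\}\notin E(F)$ is isolated in this link, so it suffices to treat the set $V_0=lk_F(X)$ of the remaining vertices. For $a,b\in V_0$ the set $X\cup\{a,b\}$ already contains the two edges $X\cup\{a\}$ and $X\cup\{b\}$, so by clique friendliness of $F$ it is a $(p+1)$-clique (equivalently $ab$ is an edge of the link) as soon as a single further facet $(X\setminus\{t\})\cup\{a,b\}$ is an edge. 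Fixing any $t\in X$ and putting $W=X\setminus\{t\}$ (so $|W|=p-2$), this equivalence shows $lk_{\mathrm{Cl}_{p+1}(F)}(X)[V_0]=lk_F(W)[V_0]$. Since $F$ is $H$-perfect, $lk_F(W)$ is perfect, hence so is its induced subgraph on $V_0$, and restoring the isolated vertices preserves perfection.

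The main obstacle is exactly this collapse, and the reason the telescoping reduction is needed. For general $r$ the edges of $lk_{\mathrm{Cl}_r(G)}(X)$ depend simultaneously on many links of $G$ --- the requirement that $X\cup\{a,b\}$ be a clique ranges over all $(k-2)$-subsets of $X$ --- and an intersection of perfect graphs need not be perfect, while clique friendliness of $G$ only constrains $(k+1)$-sets, which is far too local to govern an $r$-set directly. The telescoping identity keeps us in the single-step regime, where the decisive set $X\cup\{a,b\}$ has exactly $p+1$ vertices, so clique friendliness applies verbatim and turns the many-link condition into the single identity $lk_{\mathrm{Cl}_{p+1}(F)}(X)[V_0]=lk_F(W)[V_0]$.
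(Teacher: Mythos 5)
Your proof is correct, and it is precisely the ``simple induction'' that the paper invokes without carrying out: the paper states the theorem with no proof beyond that phrase, and your telescoping identity reduces it to exactly the one-step claim ($p$-uniform to $(p+1)$-uniform) that such an induction needs. Both halves of your single step check out --- the three-facet argument correctly forces a $(p+2)$-set with three edges of $\mathrm{Cl}_{p+1}(F)$ to be an $F$-clique, and the collapse $lk_{\mathrm{Cl}_{p+1}(F)}(X)[V_0]=lk_F(X\setminus\{t\})[V_0]$, valid for any fixed $t\in X$ by clique friendliness of $F$, reduces $H$-perfectness of the link to perfection of $lk_F(X\setminus\{t\})$, with the vertices outside $V_0$ correctly noted to be isolated.
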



{\bf Remark:} The class of graphs ${\cal P}_k$ for which the hypergraphs of cliques 
of size $k$
are $H_\omega$-perfect seems an interesting  extension of the class of perfect graphs.

\subsection {$H_\omega$-perfect triangulated spheres}

Recall that $k$-partite $k$-uniform hypergraphs are $H_\omega$-perfect.
The converse is far from being true: indeed, simple hypergraphs
are $H_\omega$-perfect and they can have arbitrary large chromatic number \cite {NesRod79}.
Here is an especially nice class of $H_\omega$-perfect hypergraphs.

\begin {theorem}
  Let $G$ be an $r$-uniform  hypergraph whose edges form a triangulation of an $(r-1)$-dimensional sphere other than $K_{r+1}^r  $.
  Then if $G$ is $H_\omega$-perfect it is
  $r$-partite.   
\end {theorem}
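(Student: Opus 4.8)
The plan is to recognize that ``$r$-partite'' is exactly the notion of a \emph{balanced} (equivalently, foldable) triangulation: a partition of $V(G)$ into $r$ classes meeting every edge once is the same as a proper colouring of the vertices by $r$ colours in which every edge of $G$ (a facet of the triangulation) is rainbow. So the goal is to produce such a colouring. I would regard $G$ as the pure $(r-1)$-dimensional simplicial complex triangulating $S^{r-1}$; it is a connected closed pseudomanifold, and for $r\ge 3$ its underlying space is simply connected. (The case $r=2$ is immediate: $G$ is then a cycle, $H$-perfectness forces it to be an even cycle once the triangle $K^2_3=K$ is excluded, and an even cycle is bipartite.) The first observation is that if $X$ is an $(r-2)$-set that is a face, then the simplicial link of $X$ is a triangulated $S^1$, i.e.\ a cycle $C_m$, and the graph $lk_G(X)$ is precisely this cycle (together with isolated vertices, which do not affect perfectness).

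Next I would show every such link cycle is even. Since $G$ is $H$-perfect, $lk_G(X)=C_m$ is a perfect graph, which already rules out every odd $m\ge 5$. It remains to exclude $m=3$, and this is where clique-friendliness and the hypothesis $G\ne K$ enter: if the link of $X$ were a triangle on $\{a,b,c\}$, then $X\cup\{a,b\}$, $X\cup\{b,c\}$, $X\cup\{c,a\}$ would be three edges inside the $(r+1)$-set $X\cup\{a,b,c\}$, so clique-friendliness would force all $r+1$ of its $r$-subsets to be edges, i.e.\ a copy of $K^r_{r+1}$ sitting inside $G$ as a subcomplex. But $K^r_{r+1}$ is itself a closed $(r-1)$-pseudomanifold, so its facets are closed under passing to ridge-adjacent facets of $G$; since the dual (facet-adjacency) graph of the connected pseudomanifold $G$ is connected, this would give $G=K^r_{r+1}=K$, contrary to hypothesis. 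Hence all link cycles have even length.

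The heart of the proof is to convert ``all codimension-two links are even'' into the existence of a global rainbow colouring. I would propagate colours along the dual graph $\Gamma$ (vertices $=$ facets, edges $=$ ridges, each ridge lying in exactly two facets): fix a rainbow colouring of one facet and, crossing a ridge from $\tau\cup\{a\}$ to $\tau\cup\{b\}$, keep the colours on $\tau$ and give $b$ the unique missing colour, necessarily the old colour of $a$. A global colouring exists iff this propagation has trivial monodromy around every closed walk in $\Gamma$. A direct computation shows the monodromy around the link cycle of a codimension-two face $\sigma$ with link $C_m$ is the transposition of the two non-$\sigma$ colours when $m$ is odd and the identity when $m$ is even; so all these elementary monodromies are trivial by the previous paragraph. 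Finally, the dual CW-structure on $G$ (whose $2$-skeleton has the facets as vertices, the ridges as edges, and one $2$-cell glued along the link cycle of each codimension-two face) has trivial fundamental group for $r\ge 3$, since its $2$-skeleton determines $\pi_1$ and the dual complex is homeomorphic to $S^{r-1}$; equivalently, $\pi_1(\Gamma)$ is normally generated by the boundary loops of these $2$-cells. As the monodromy is a homomorphism on $\pi_1(\Gamma)$ that kills all of these generators, it is identically trivial, the propagation is path-independent, and the resulting consistent vertex colouring exhibits $G$ as $r$-partite.

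I expect the monodromy/$\pi_1$ step to be the main obstacle, both to state cleanly and to justify rigorously, since it is the one genuinely topological ingredient (for $r=3$ it is exactly Heawood's theorem that an Eulerian triangulation of $S^2$ is $3$-colourable). Two routes would make it airtight: invoke the dual CW-structure and simple-connectivity of $S^{r-1}$ as above, or set up an induction on $r$ in which one first checks that vertex links are again $H_\omega$-perfect triangulations of $S^{r-2}$ (which they are, by the same clique-friendliness computation that forbids $K^r_{r+1}$) and then glues the inductively obtained local colourings, the gluing being controlled by precisely the same even-cycle condition. I would also double-check the one place the manifold hypothesis is used beyond pseudomanifold-ness, namely that links of codimension-two faces are single cycles rather than disjoint unions of cycles.
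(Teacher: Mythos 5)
Your proposal is correct, and its skeleton is exactly what the paper intends: the paper's entire ``proof'' is the one-line remark that the statement is a reformulation of Ore's theorem for planar triangulations ($r=3$) and its known high-dimensional extensions, with a pointer to \cite{GooOni78,IzmJos03}. What you have done differently is to make the reformulation explicit and then reprove the cited classical ingredient rather than quoting it. The reformulation step is where your proof adds genuine value over the paper's text: you correctly isolate that $H$-perfectness of the links $lk_G(X)$ of codimension-two faces (which are cycles, plus irrelevant isolated vertices) only excludes odd holes $C_m$, $m\ge 5$, and that the triangle link $C_3$ must be killed separately by clique-friendliness --- three edges inside the $(r+1)$-set $X\cup\{a,b,c\}$ force a full $K^r_{r+1}$, whose facet set is closed under ridge-adjacency, so strong connectedness of the triangulated sphere forces $G=K$, contradicting the hypothesis. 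This is precisely why the theorem needs $H_\omega$-perfect rather than just $H$-perfect (a stacked $5$-vertex $2$-sphere is $H$-perfect but not $3$-partite), a point the paper leaves entirely implicit. Your monodromy argument on the dual graph, with link cycles of codimension-two faces giving transposition holonomy exactly when odd, and simple connectivity of $S^{r-1}$ normally generating $\pi_1$ of the dual $1$-skeleton by the boundary loops of the dual $2$-cells, is the standard proof of the Goodman--Onishi/Izmestiev--Joswig foldability theorem, so your route is a self-contained version of the paper's citation. The only caveats are the ones you already flagged yourself: links of codimension-two faces are single circles (this holds because a triangulated manifold is a homology manifold, so such links are homology $1$-spheres), and the dual-block/CW argument is cleanest in the PL category --- harmless here, and in any case the paper's own references carry that technical weight.
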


For $r=3$ this is a reformulation of Ore's theorem for planar graphs. The general case
follows from known high dimensional extensions of Ore's theorem
that asserts that if all links of $(r-3)$-faces of an $(r-1)$-dimensional sphere $S$ are even cycles then
the graph of $S$ is $r$-colorable.
See, e.g., \cite {GooOni78} (mainly for $r=4$), \cite {Jos02} (for arbitrary dimensions), 
and references in these papers.

\section{Perfect cocycles}
\label {s:co}
\subsection {$H$-perfect cocycles}

\begin {figure}
\centering
\includegraphics[scale=1.0]{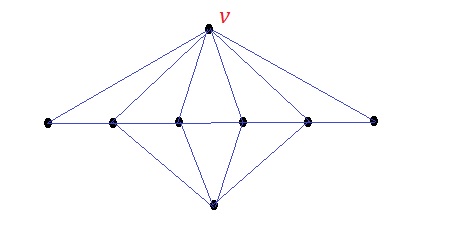}
\caption{A graph whose cocycle is doubly perfect yet not $H$-perfect since $lk_{co(G)}(v)\sim C_7$.}
\label{fig:perfect2}
\end{figure}

Let $G$ be a graph. An unordered triple $\{x,y,z\} \subseteq V(G)$
is {\em $G$-odd} if  $|E(G[\{x,y,z\}])|$ is odd. 
We denote by $co(G)$ the $3$-uniform hypergraph with
vertex set $V(G)$ and such that 
$\{x,y,z \} \in E(co(G))$ if and only $\{x,y,z\}$ is $G$-odd.
Hypergraphs in $co(G)$ are called {\em 3-cocycles} or {\em two-graphs} (\cite {Sei76}).
Note that every cocyle and every complement of a cocycle is
clique friendly. Moreover, by \eqref{tetra} doubly  perfect 3-uniform
hypergraphs are cocycles. 

We start with two observations.

\begin{theorem}\label{cocycleprop}
  Let $G$ be a graph. Then
  \begin{enumerate}
  \item $co(G)^c=co(G^c)$.
  \item For $X \subseteq V(G)$, $co(G)[X]=co(G[X])$.
  \end{enumerate}
\end{theorem}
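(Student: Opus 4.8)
The plan is to prove both statements directly from the definition, reducing each to an elementary parity count on a single triple of vertices. The only arithmetic fact I will need is that for any triple $T = \{x,y,z\}$ the three pairs $xy, yz, zx$ are split between $G$ and $G^c$, so that $|E(G[\{x,y,z\}])| + |E(G^c[\{x,y,z\}])| = 3$. Since both a hypergraph complement and an induced subhypergraph act triple-by-triple, each claimed equality can be checked one triple at a time.

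For the first statement I would fix an arbitrary triple $\{x,y,z\}$ and chase the equivalences. By definition $\{x,y,z\} \in E(co(G)^c)$ exactly when $\{x,y,z\} \notin E(co(G))$, i.e.\ when $|E(G[\{x,y,z\}])|$ is even. On the other hand $\{x,y,z\} \in E(co(G^c))$ exactly when $|E(G^c[\{x,y,z\}])|$ is odd. Using $|E(G^c[\{x,y,z\}])| = 3 - |E(G[\{x,y,z\}])|$ and the fact that $3$ is odd, these two counts always have opposite parities, so one is even precisely when the other is odd. Hence the two membership conditions coincide for every triple, which gives $co(G)^c = co(G^c)$.

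For the second statement I would again fix a triple $\{x,y,z\} \subseteq X$. The induced subhypergraph $co(G)[X]$ contains $\{x,y,z\}$ iff $co(G)$ does, i.e.\ iff $|E(G[\{x,y,z\}])|$ is odd, while $co(G[X])$ contains $\{x,y,z\}$ iff $|E((G[X])[\{x,y,z\}])|$ is odd. The key observation is that restricting $G$ to $X$ and then to $\{x,y,z\}$ yields the same graph as restricting $G$ directly to $\{x,y,z\}$, because $\{x,y,z\} \subseteq X$; that is, $(G[X])[\{x,y,z\}] = G[\{x,y,z\}]$. So the two parity conditions are literally identical, and the two hypergraphs agree on every triple contained in $X$.

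There is essentially no hard step here: both equalities fall out once the parity bookkeeping is set up, so the proof is complete as soon as one records the identity $|E(G^c[T])| = 3 - |E(G[T])|$ and the transitivity of induced-subgraph restriction. If anything demands care, it is only keeping straight which complement is being taken --- the graph complement $G^c$ versus the hypergraph complement $co(G)^c$ --- but since both operate triple-by-triple the verification is routine.
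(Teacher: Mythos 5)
Your proof is correct: the paper states this result as an observation with no proof at all, and your triple-by-triple parity check (using $|E(G[T])|+|E(G^c[T])|=3$ with $3$ odd for part 1, and transitivity of induced restriction for part 2) is exactly the routine verification the authors evidently had in mind.
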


\begin{theorem}
  \label{doublyperfect}
  A $3$-cocycle is $H$-perfect if and only if it is both
  $H_{\omega}$-perfect and $H_{\alpha}$-perfect.
  In particular, if  a $3$-cocycle is  $H$-perfect, then it is doubly perfect.
  \end{theorem}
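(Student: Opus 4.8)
The plan is to reduce everything to the Weak Perfect Graph Theorem together with the fact that, for $k=3$, the link identity $lk_{G^c}(v)=(lk_G(v))^c$ from the introduction turns $H$-perfectness into a self-complementary property. First I would record that both directions rest on the observation that a $3$-cocycle and its complement are automatically clique friendly: by definition every four vertices of a cocycle span an even number of edges, hence $0$, $2$, or $4$; and by \ref{cocycleprop}(1) the complement $G^c$ is again a cocycle, so the same holds there. Consequently, for a $3$-cocycle $G$, being $H$-perfect is equivalent to being $H_\omega$-perfect (clique friendliness comes for free), and likewise $G^c$ being $H$-perfect is equivalent to $G^c$ being $H_\omega$-perfect, i.e.\ to $G$ being $H_\alpha$-perfect.

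The backward direction of the stated equivalence is immediate, since $H_\omega$-perfectness includes $H$-perfectness by definition. For the forward direction I would prove the key claim that, for any $3$-uniform hypergraph, $G$ is $H$-perfect if and only if $G^c$ is $H$-perfect. This is where the Weak Perfect Graph Theorem \cite{Lov72} enters: for each vertex $v$ we have $lk_{G^c}(v)=(lk_G(v))^c$, and a graph is perfect precisely when its complement is, so $lk_G(v)$ is perfect exactly when $lk_{G^c}(v)$ is. Taking the conjunction over all $v$ gives the claim. Combining it with the clique-friendliness remark above yields, for a cocycle, the chain of equivalences $H\text{-perfect}\iff H_\omega\text{-perfect}\iff H_\alpha\text{-perfect}$, which proves the first statement.

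For the ``in particular'' clause I would invoke \ref{t:main}. If the $3$-cocycle $G$ is $H$-perfect then, by what has just been shown, both $G$ and $G^c$ are $H_\omega$-perfect. Applying \ref{t:main} to $G$ gives that $G$ is $C_\omega$-perfect; applying it to $G^c$ gives that $G^c$ is $C_\omega$-perfect, which by definition means that $G$ is $C_\alpha$-perfect. Hence $G$ is doubly perfect.

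The argument is short and the only genuinely substantive input is the Weak Perfect Graph Theorem; most of the work is bookkeeping with the two complementation-closed properties. The one point that deserves explicit checking — and which I expect to be the mild obstacle — is verifying that clique friendliness is truly automatic for cocycles (and inherited by the complement via \ref{cocycleprop}(1)), since it is precisely this that collapses $H_\omega$- and $H_\alpha$-perfectness onto ordinary $H$-perfectness and makes the self-complementary nature of the link condition the whole story.
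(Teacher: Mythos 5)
Your proposal is correct and follows essentially the same route as the paper's proof: complement of a cocycle is a cocycle (via \ref{cocycleprop}), clique friendliness is automatic for cocycles and their complements, $H$-perfectness is self-complementary via the link identity and the Weak Perfect Graph Theorem, and \ref{t:main} (contained in \ref{summary}) yields double perfectness. The only difference is that you spell out explicitly the WPGT step and the identity $lk_{G^c}(v)=(lk_G(v))^c$, which the paper leaves implicit in the line ``Then $F^c$ is $H$-perfect.''
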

\begin{proof}
  Let $F$ be an $H$-perfect $3$-cocycle. Then $F^c$ is $H$-perfect.
  By \ref{cocycleprop} $F^c$ is a cocycle.
  Then both $F$ and $F^c$   are clique friendly. It follows that $F$ and $F^c$ are both
  $H_{\omega}$-perfect, and therefore
  $F$ is both  $H_{\omega}$-perfect and $H_{\alpha}$-perfect.
  Now by \ref{summary} both $F$ and $F^c$ are
  $C_{\omega}$-perfect, and therefore $F$ is doubly perfect.
  \end{proof}

The goal of this section is to describe graphs $G$ for which
$co(G)$ is $H$-perfect, and to
study the larger class of graphs $G$ for which $co(G)$ is doubly perfect.

For a graph $G$ and a vertex $v \in V(G)$ we denote by $N_G(v)$ the set
of neighbors of $v$, and by $M_G(v)$ the set of non-neighbors of $v$.
Note that $v \not \in N_G(v) \cup M_G(v)$. When there is no danger of
confusion we omit the subscript $G$. Let $v \in V(G)$.
We define the graph $G^+(v)$ as follows.
$V(G^+(v))=V(G) \setminus \{v\}$ and
$xy \in E(G^+(v))$ if and only if one of the following statements holds:
\begin{itemize}
\item $x,y \in N_G(v)$ and $xy \in E(G)$
\item $x,y \in M_G(v)$ and $xy \in E(G)$
\item $x \in N(v)$, $y \in M(v)$ and $xy \not \in E(G)$.
\end{itemize}
Note that $G^+(v)=lk_{co(G)}(v)$.

Next we define a family of graphs that we call ``pre-odd-holes''.
A pair  $(G,v)$ where $G$ is a graph with an even number $\geq 6$ of vertices,
and $v \in V(G)$  is a
{\em pre-odd-hole centered at $v$} if there exists  an even integer $k$ such that
$V(G) \setminus \{v\}$ can be partitioned into $k$ disjoint non-empty subsets
$P_1, \ldots, P_k$
and the edges of $G$ are as follows. 
\begin{itemize}
\item $N_G(v)=\bigcup_{i \text{ even}}P_i$.
\item For every $i$, $G[P_i]$ is a path with vertices $p_1^i, \ldots, p_{n_i}^i$
  in order.
\item If $i \neq j$ and $i=j \mod 2$ then there are no edges between $P_i$ and
  $P_j$.
\item If $i \neq j \mod 2$ and $|i-j| \neq 1 \mod k$, then every vertex of
  $P_i$ is adjacent to every vertex of $P_j$.
\item If $k>2$,  $j=i+1$, or $i=k$ and $j=1$ then $p_{n_i}^i$ is non-adjacent to
  $p_1^j$ and all the other edges between $P_i$ and $P_j$ are present.
\item If $k=2$, then $p_{n_1}^1$ is non-adjacent to $p_1^2$,
  $p_1^1$ is non-adjacent to $p_{n_2}^2$, and all the other edges between
  $P_1$ and $P_2$ are present.
\end{itemize}.

A pair $(G,v)$ is a {\em pre-odd-antihole centered at $v$} if $(G^c,v)$ is a
pre-odd-hole centered at $v$.
We say that a graph $G$ is a {\em pre-odd-hole} if $(G,v)$ is a pre-odd-hole
centered at $v$ for some $v \in V(G)$; a {\em pre-odd-antihole} is defined
similarly.

For a graph $G$ and an induced subgraph $H$ of $G$,
we say that $v \not \in V(H)$ is a {\em center} for $H$
if $v$ is complete to $V(H)$, and an {\em anticenter} for $H$ if
$v$ is anticomplete to  $H$.

We say  that $G$ is {\em pure} if
\begin{itemize}
  \item No odd hole of $G$ has a center.
  \item No odd antihole of $G$ has a center.
  \item No odd hole of $G$ has an anticenter.
  \item No odd antihole of $G$ has an anticenter.
  \item No induced subgraph of $G$ is a pre-odd-hole.
  \item No induced subgraph of $G$ is a pre-odd-anti-hole.
\end{itemize}

We prove:
\begin{theorem}\label{perfectcocycle}
  $co(G)$ is $H$-perfect  if and only if $G$ is
  pure.
\end{theorem}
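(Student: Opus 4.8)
The plan is to reduce everything to a single classification lemma describing when the link graph $G^+(v)=lk_{co(G)}(v)$ contains an induced odd hole. Since $co(G)$ is a $3$-uniform hypergraph, it is $H$-perfect exactly when $lk_{co(G)}(v)=G^+(v)$ is a perfect graph for every $v\in V(G)$, and by the Strong Perfect Graph Theorem this holds iff no $G^+(v)$ contains an induced odd hole or induced odd antihole. So it suffices to translate the presence of an odd hole (resp.\ odd antihole) in some $G^+(v)$ into a forbidden induced subgraph of $G$, and to check that the resulting list of six forbidden configurations is exactly the list defining purity.

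The main work is a switching analysis. First I would record the description of $G^+(v)$ coming from its three defining bullets: $G^+(v)$ is obtained from $G$ by complementing all edges between $N_G(v)$ and $M_G(v)$ (Seidel switching along $N_G(v)$) and then deleting the now-isolated vertex $v$. For an induced subgraph $H=G[W\cup\{v\}]$ with $W=V(H)\setminus\{v\}$, writing $A=N_H(v)$ and $B=M_H(v)$, we have $H^+(v)=G^+(v)[W]$ with $H^+(v)[A]=H[A]$, $H^+(v)[B]=H[B]$, and between $A$ and $B$ the bipartite complement of $H$. The \emph{Classification Lemma} I would prove is: $H^+(v)$ is an induced odd hole iff $(H,v)$ is one of (i) an odd hole $H[W]$ with $v$ a center, (ii) an odd hole $H[W]$ with $v$ an anticenter, or (iii) a pre-odd-hole centered at $v$.

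For the lemma, suppose $H^+(v)[W]=C$ is an odd hole of length $m\ge 5$ and examine the cyclic $2$-coloring of $V(C)$ induced by the partition $W=A\sqcup B$. If the coloring is constant, un-switching leaves $C$ unchanged, giving case (i) (all of $W$ in $N(v)$) or (ii) (all in $M(v)$). Otherwise the maximal monochromatic arcs $P_1,\ldots,P_k$ alternate in color around $C$, so $k$ is even; each $P_i$ induces a subpath of $C$, hence a path; same-colored arcs are non-adjacent on $C$ and so (lying on one side) anticomplete in $H$; differently-colored non-consecutive arcs are anticomplete in $C$ and so complete in $H$; and consecutive arcs $P_i,P_{i+1}$ share exactly the single cycle-edge joining the end of $P_i$ to the start of $P_{i+1}$, which un-switches to the ``all present but that one'' pattern. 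Reading these conditions off (with the $k=2$ case producing two such junctions) reproduces verbatim the definition of a pre-odd-hole centered at $v$, and $|V(H)|=m+1$ is even. Conversely, running the switching forward on each of the three configurations produces an induced odd hole in $H^+(v)$ (for a pre-odd-hole the paths $P_i$ reconnect through the surviving cross-edges into one induced cycle of odd length $m=|V(H)|-1$). This arc/block analysis, and matching it precisely to the intricate pre-odd-hole definition (especially the $k=2$ exceptional bullet and the parity bookkeeping), is the step I expect to be the main obstacle; everything else is formal.

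With the lemma in hand the theorem follows in both directions. Passing to complements, $(G^c)^+(v)=(G^+(v))^c$, since $lk_{co(G^c)}(v)=lk_{co(G)^c}(v)=(lk_{co(G)}(v))^c$ by \ref{cocycleprop}; hence the Classification Lemma applied to $G^c$ shows that some $G^+(v)$ contains an induced odd antihole iff $G$ contains an odd antihole with a center, an odd antihole with an anticenter, or a pre-odd-antihole. Combining the hole and antihole versions, some $G^+(v)$ fails to be Berge iff $G$ contains one of the six configurations forbidden in the definition of purity; equivalently, every $G^+(v)$ is perfect, i.e.\ $co(G)$ is $H$-perfect, iff $G$ is pure. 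I would finish by noting that each forbidden configuration, being an induced subgraph of $G$, produces the offending odd hole or antihole as an induced subgraph of the corresponding link, so the correspondence is genuinely between induced subgraphs on both sides, as the definitions of $H$-perfectness and purity require.
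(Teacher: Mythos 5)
Your proposal is correct and follows essentially the same route as the paper: reduce via the Strong Perfect Graph Theorem to each link $G^+(v)$ being Berge, classify odd holes in $G^+(v)$ by analyzing the maximal arcs (the paper's ``sectors'') of the cycle inside $N_G(v)$ and $M_G(v)$ under the switching, obtaining exactly the center/anticenter/pre-odd-hole trichotomy, and handle antiholes by complementation using $co(G)^c=co(G^c)$. Your write-up is if anything more explicit than the paper's at the step it dismisses as ``easy to check,'' but the argument is the same.
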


\begin{proof}
  By 
  the Strong Perfect Graph theorem,
  it is enough to prove that $G$ is pure if and only if for every $v \in V(G)$,
  the graph $G^+(v)$ is Berge.
  
Let us say that $v \in V(G)$ is {\em pure} if all of the following hold.
\begin{enumerate}
  \item $v$ is not a center for an  odd hole of $G$. 
  \item $v$ is not a center for an odd antihole of $G$.
  \item $v$ is not an anticenter for an odd  hole of $G$.
  \item $v$ is not an anticenter for an  odd antihole of $G$. 
  \item $(H,v)$ is not a pre-odd-hole with center $v$ for any induced subgraph
    $H$ of $G$.
  \item $(H,v)$ is not a pre-odd-antihole with center $v$ for any induced
    subgraph $H$ of $G$.
\end{enumerate}    

Cleary $G$ is pure if and only if every vertex of $G$ is pure.

We show that for $v \in V(G)$ the graph $G^+(v)$ is Berge if and only if  $v$
is pure.
  Let $v \in V(G)$, write
  $N=N_G(v)$ and $M=M_G(v)$.
  Suppose $C$  is an odd hole in $G^+(v)$ with vertices
  $c_1,\ldots, c_{2t+1},c_1$ in cyclic order.
  If $V(C) \subseteq N$, then $v$ is a center for $C$, and if
  $V(C) \subseteq M$, then $v$ is a an anticenter for $C$.
Now assume that $C$   meets both $N$ and $M$.
 Let $\{A,B\}=\{N,M\}$.
 Note that
\begin{itemize}
\item  if $x$ and $y$ are two consecutive vertices of $C$, and they are both
 in $A$ or both in $B$, then 
 $xy \in E(G)$,
\item if $x$ and $y$ are two consecutive vertices of $C$, and $x \in A$ and
  $y \in B$, then $xy \not \in E(G)$
\item  if $x,y$ are non-consecutive vertices of $C$, and they are both
 in $A$ or both in $B$, then 
 $xy  \not \in E(G)$,
\item if $x,y$ are non-consecutive vertices of $C$, and  $x \in A$ and $y \in B$,
 then $xy \in E(G)$.
\end{itemize}

A {\em sector} of $C$ is the vertex set of  a maximal path of $C$ that is contained in $M$ or in $N$. Thus for some even integer $k$,  the set
$\{c_1, \ldots, c_{2t+1}\}$  can be partitioned into sectors
$P_1, \ldots, P_k$ where
$\{c_1, \ldots, c_{2t+1}\} \cap N=\bigcup_{i \text{ even}}P_i$.
Now it is easy to check that $G[\{v,c_1, \ldots, c_{2t+1}\}]$  is 
a pre-odd-hole centered at $v$.

Conversly, if $(H,v)$ is a pre-odd-hole centered at $v$, then traversing
the paths $P_1, \ldots, P_k$ (from the defintion of a pre-odd-hole) in order
we obtain and odd hole in $G^+(v)$.

This show that $G^+(v)$ contains an odd hole if and only
if $v$  fails to satisfy
one of the odd-numbered conditions for being pure.
Applying this to $G^c$, we deduce that
$G^+(v)$ contains an odd antihole if and only if $v$ fails to satisfy
one of the even-numbered conditions for being pure.
Thus we showed that $v$ is pure if and only if $G^+(v)$ is Berge.
Since $G$ is pure if and only if every vertex of $G$ is pure, this proves~\ref{perfectcocycle}.
\end{proof}

In view of \ref{doublyperfect} we deduce
\begin{theorem}\label{doublyperfectcocycle}
  $G$ is pure if and only if $co(G)$ is both $H_{\omega}$-perfect and
  $H_{\alpha}$-perfect. In particular, if
   $G$ is pure, then   $co(G)$ is doubly perfect.
 \end{theorem}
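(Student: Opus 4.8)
The plan is to deduce this statement immediately by chaining the two results established just above, namely \ref{perfectcocycle} and \ref{doublyperfect}. The crucial observation that makes this a corollary rather than a fresh argument is that $co(G)$ is, by definition, a $3$-cocycle, so \ref{doublyperfect} applies to it verbatim. In other words, all of the substantive structural work has already been done: \ref{perfectcocycle} translates purity of $G$ into $H$-perfectness of $co(G)$ through the analysis of odd holes, odd antiholes, and pre-odd-holes, while \ref{doublyperfect} handles the passage between $H$-perfectness and the pair of one-sided perfectness conditions.

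First I would invoke \ref{perfectcocycle}, which furnishes the biconditional that $G$ is pure if and only if $co(G)$ is $H$-perfect. Next, since $co(G)$ is a $3$-cocycle, \ref{doublyperfect} furnishes the biconditional that $co(G)$ is $H$-perfect if and only if $co(G)$ is both $H_{\omega}$-perfect and $H_{\alpha}$-perfect. Composing these two equivalences yields precisely the first assertion of the theorem: $G$ is pure if and only if $co(G)$ is both $H_{\omega}$-perfect and $H_{\alpha}$-perfect.

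For the ``in particular'' clause, I would appeal to the second sentence of \ref{doublyperfect}, which records that every $H$-perfect $3$-cocycle is doubly perfect. Thus if $G$ is pure, then $co(G)$ is $H$-perfect by \ref{perfectcocycle}, and hence doubly perfect by \ref{doublyperfect}. This completes the deduction.

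Since the genuine content has all been discharged in the proof of \ref{perfectcocycle}, I do not expect any real obstacle here. The one point to keep in mind is that \ref{doublyperfect} relies on $co(G)^c = co(G^c)$ being a cocycle (via \ref{cocycleprop}) and on clique-friendliness of cocycles and their complements to move between $H_{\omega}$- and $H_{\alpha}$-perfectness through the Weak Perfect Graph Theorem; but these hypotheses hold automatically because $co(G)$ is a cocycle, so nothing beyond citing the two prior theorems is required.
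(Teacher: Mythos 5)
Your proposal is correct and matches the paper exactly: the paper derives this theorem with no separate proof, simply stating ``In view of \ref{doublyperfect} we deduce'' after \ref{perfectcocycle}, which is precisely your composition of the two biconditionals plus the ``in particular'' clause of \ref{doublyperfect}. Your added remark that $co(G)$ is automatically a $3$-cocycle (so \ref{doublyperfect} applies) is the right justification and is implicit in the paper.
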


\subsection {Doubly-perfect cocycles}

We present many examples of graphs $G$ such that $co(G)$ is not doubly perfect.
The class of graphs $G$ for which $co(G)$ is doubly perfect is closed under induced subgraphs
and hence it can be described in terms of forbidden induced subgraphs. However, our examples
suggest that such a description could be out of reach.

Start with a triangle-free graph $H$ with  chromatic number  $\chi (H) >3$.
Next, add a new vertex $v$ adjacent to an arbitrary subset $A \subset V(H)$. Consider now the graph
$G$ obtained from $H+v$ by the following operation: for every edge $e=(x,y)$ where
$x \in A$ and $y \in V(H) \backslash A$, $e \in E(G)$ iff $e \notin E(H)$. For all other edges
$e \in E(G)$ iff $e \in E(H)$. (In other words, we switch between edges and nonedges between $A$ and $V(H)
\backslash A$.)

\begin {theorem}\label{triangle-free}
  $co(G)$ is not $C_\omega$-perfect.
\end {theorem}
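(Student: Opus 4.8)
The plan is to show that $co(G)$ contains an induced sub-hypergraph that violates the defining coloring property of $C_\omega$-perfection, by exploiting the link structure $lk_{co(G)}(v) = G^+(v)$ together with the earlier results relating cliques in links to cliques in $co(G)$. The construction is engineered so that the switching operation around the vertex $v$ converts the triangle-free graph $H$ into a graph whose cocycle-link at $v$ is forced to be $H$ itself (or a closely related triangle-free graph), while keeping the clique number of $co(G)$ small. The high chromatic number of $H$ then obstructs the required proper coloring.

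\medskip

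\emph{First} I would compute $G^+(v) = lk_{co(G)}(v)$ explicitly. By the definition of $G^+(v)$, an edge $xy$ is present exactly when $x,y$ are on the same side of the $N(v)/M(v)$ split and adjacent in $G$, or on opposite sides and non-adjacent in $G$. Since $G$ was obtained from $H+v$ precisely by switching edges and non-edges between $A = N_G(v)$ and $V(H)\setminus A = M_G(v)$, I expect the double switch (first in constructing $G$, then in passing to $G^+(v)$) to cancel, yielding $G^+(v) \cong H$. I would verify this case-by-case on the three bullets: for $x,y$ both in $A$ or both in $M(v)$ adjacency is inherited unchanged from $H$, and for $x\in A, y\in M(v)$ the ``non-adjacent in $G$'' condition unwinds the switch back to ``adjacent in $H$.'' The upshot is $G^+(v)\cong H$, so $G^+(v)$ is triangle-free with $\chi(G^+(v)) = \chi(H) > 4$.

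\medskip

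\emph{Next} I would bound $\omega(co(G))$. Because $G^+(v)$ is triangle-free, $\omega(G^+(v)) = 2$. By \ref{tetra} (applied within each four-vertex induced sub-hypergraph) together with \ref{smallerclique}, a clique $K$ in $G^+(v)=lk_{co(G)}(v)$ extends to a clique $K\cup\{v\}$ in $co(G)$, so cliques of $co(G)$ through $v$ have size at most $3$. I would then argue that $\omega(co(G))$ is small enough (one needs $\omega(co(G)) \le 3$, equivalently that $co(G)$ has no $K_4^3$) that the required number of colors $\omega(co(G))-k+2 = \omega(co(G))-1$ is at most the chromatic-number-obstructing value. Concretely, if $C_\omega$-perfection held then the proper $(\omega(co(G))-1)$-coloring of $co(G)$ would restrict, taking $X=\{v\}$, to a proper $(\omega(co(G))-1)$-coloring of $lk_{co(G)}(v)=G^+(v)\cong H$. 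But a proper coloring of a link of a $3$-cocycle (where $1$-tuples, i.e.\ vertices, are colored and no link-edge is monochromatic) is exactly an ordinary proper vertex coloring of the graph $H$, so $H$ would be $(\omega(co(G))-1)$-colorable, contradicting $\chi(H) > 4 \ge \omega(co(G))-1$.

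\medskip

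\textbf{The main obstacle} is pinning down $\omega(co(G))$ precisely: the contradiction requires $\omega(co(G)) - 1 < \chi(H)$, and while cliques through $v$ are controlled by the triangle-freeness of $G^+(v)\cong H$, cliques of $co(G)$ avoiding $v$ correspond to large sets of vertices of $G$ every four of which span an odd number of edges, so I must confirm these cannot grow too large — either by showing $\omega(co(G)) \le \chi(H)$ outright, or by passing to a suitable induced sub-hypergraph $G'$ containing $v$ where $\omega(G')-1$ is genuinely below $\chi(H)$ while $G'^+(v)$ still has chromatic number exceeding $\omega(G')-1$. I would handle this by restricting attention to the induced sub-hypergraph on $\{v\}\cup V(H)$ and noting that within it every clique of $co(G)$ not containing $v$ lives inside $H$'s vertex set, where triangle-freeness of $H$ again caps clique sizes; the gap between $\chi(H)$ and $\omega$ (which can be made arbitrarily large for triangle-free graphs) gives the needed slack.
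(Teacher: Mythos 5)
Your opening step and your closing step match the paper exactly: the case analysis showing $lk_{co(G)}(v)=G^+(v)\cong H$ is the paper's \ref{triangle-free2}, and the observation that a proper $(\omega(co(G))-1)$-coloring of $co(G)$ restricting to the link at $v$ is an ordinary proper vertex coloring of $H$, contradicting $\chi(H)>4$, is the paper's concluding argument. The genuine gap is precisely at the point you yourself flag as the ``main obstacle'': cliques of $co(G)$ avoiding $v$. Your proposed fix does not close it. Restricting to the induced subhypergraph on $\{v\}\cup V(H)$ is vacuous, since that is all of $co(G)$. And the claim that ``triangle-freeness of $H$ again caps clique sizes'' inside $V(H)$ is unjustified: by \ref{cocycleprop} the induced hypergraph there is $co(G)[V(H)]=co(G[V(H)])$, and $G[V(H)]$ is the \emph{switched} graph, which in general is not triangle-free. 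The missing idea is switching-invariance of the cocycle: switching between $A$ and $V(H)\setminus A$ changes the adjacency of exactly $0$ or $2$ pairs inside any triple, so all triple parities are preserved and $co(G[V(H)])=co(H)$. This is how the paper (\ref{triangle-free3}) reduces cliques avoiding $v$ to cliques of $co(H)$.

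Even granting that reduction, your target bound is wrong: you assert ``one needs $\omega(co(G))\le 3$, equivalently that $co(G)$ has no $K_4^3$,'' but this is false --- any two disjoint edges of $H$ span four vertices every triple of which is $H$-odd, giving a $K_4^3$ in $co(H)$ and hence in $co(G)$. (Note also the slip: a clique of $co(G)$ is a set every \emph{three} of whose vertices span an odd number of $G$-edges, not every four.) What must actually be verified, and what the paper verifies, is that $co(H)$ contains no $K_5^3$ when $H$ is triangle-free; combined with the absence of $K_4^3$ through $v$ (immediate from $lk_{co(G)}(v)=H$ being triangle-free, no appeal to \ref{tetra} or \ref{smallerclique} needed --- those concern $C_\omega$-perfect hypergraphs, which is what we are trying to refute), this gives $\omega(co(G))=4$, so the permitted number of colors is $3<5\le\chi(H)$. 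Your final inequality $\chi(H)>4\ge\omega(co(G))-1$ happens to need only $\omega(co(G))\le 5$, so the overall scheme is salvageable, but as written the one nontrivial estimate beyond the link computation is not established, and the intermediate claim you lean on (no $K_4^3$) is false.
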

\begin {proof}
We need two claims.  
\begin {theorem}\label{triangle-free2}
Let $v \in V(G)$. Then $lk_{co(G)}(v) =H$
\end {theorem}
\begin {proof}
A triple $\{v,a,b\}$ belongs to $co(G)$ if it has an odd number of edges from $G$.
If $a$ and $b$ are both in $A$ or in $V(H) \backslash A$ this occurs iff $ab$ is an edge in $H$.
If $a \in A$ and $b \in V(H) \backslash A$ this occurs iff $ab$ is not an edge of $G$ and hence is an edge in $H$.
\end {proof}
\begin {theorem}\label{triangle-free3}
$\omega (co (G))=4$.
\end {theorem}
\begin {proof}
Since $lk_{co(G)}(v) =H$, there is not even $K_4^3$ in $co(G)$ that includes the vertex $v$. $co (G)$ and $co(H)$ restricted to
all other vertices coincide and it is easy to verify that $co(H)$ contains no $K_5^3$ when $H$ is triangle-free.
\end {proof}
To conclude the proof of \ref {triangle-free}
we note that a proper coloring of pairs of vertices of $co (G)$ with $t$ colors restricts to a proper vertex coloring of $H$
with $t$ colors, and since
  $\chi(H) >3  =  \omega(co(G))-1$, we deduce that $co(G)$ is not $C_\omega$-perfect.
\end {proof}

\subsection {Simonyi's characterization of entropy splitting hyperhraphs}

Now we  describe  a notion of perfect hypergraphs 
which is closely
related to (and yet interestingly different from) our notion of doubly-perfect hypergraphs.
In \cite {CKLMS90} Csisz\'ar, K\"orner, Lov\'asz, Marton, and Simonyi gave a characterization of perfect graphs in
terms of the equality case of certain subadditivity inequality (by K\"orner) involving graph entropy.

Gabor Simonyi \cite {Sim95} studied cases of equality for an extended entropic inequality  for $k$-uniform hypergraphs. This led to the
the class of entropy splitting hypergraphs giving an extension of the notion of perfectness to uniform hypergraphs.
Both these papers are related to K\"orner's important notion of
graph entropy \cite {Kor73} and subsequent works by K\"orner and Longo, and K\"orner and Marton.

As proved by Simonyi, for $k>3$ only complete and edgeless hypergraphs have the entropy splitting property.
For $k=3$ entropy-splitting hypergraphs is a restricted class of 3-uniform hypergraphs and
below are several equivalent characterizations of this class:

\begin {enumerate}
    
  \item  On every four vertices they have an even number of edges and, in addition, they do not contain a special
    hypergraph on 5 vertices as an induced subhypergraph. This special hypergraph is (in our language)  $co(C_5)$,
    the cocycle defined by the five length cycle.

  \item These 3-uniform hypergraphs can be obtained by starting with a single edge on three vertices
    and applying two operations (any number of times in any order):

    a) taking the complementary 3-uniform hypergraph

    b) duplicating a vertex.

    Here, duplicating a vertex $v$ consists of introducing a new vertex $v'$
    that appears in an edge $v'xy$ iff $vxy$ is also an edge. (The two vertices $v$ and $v'$ do not appear together in any edge.)

  \item The class of cocycles of cographs. (We recall that cographs are graphs that can be obtained from a single vertex graph
    by repeated applications of disjoint union and taking complements. Equivalently they are the class of graphs
    with no induced path on four vertices.)
    
    \item Entropy splitting 3-uniform hypergraphs have a ``leaf-pattern'' representation defined as follows:
      Given a tree with all its inner (non-leaf) vertices labeled with 0 or 1 (in an arbitrary manner),
      the leaf-pattern of this labeled tree is the following 3-uniform hypergraph.
      Its vertices are the leaves of the tree and three leaves, $x,y,z$ form an edge iff the unique common
      point of the three paths $x-y, y-z, x-z$ is labeled with 1.
\end {enumerate}
      
The equivalence of the classes given by the first and second items requires work
and it turned out that this equivalence goes back to a 1984 paper by Gurvich \cite {Gur84}. 

It follows that entropy-splitting 3-uniform hypergraphs form a subclass of $H$-perfect 3-uniform cocycles
(and hence also of doubly perfect 3-uniform hypergraphs). It is easy to see that cographs are pure. The first four
obstruction to purity contain holes or antiholes, and therefore contain paths on four vertices.
The last two require a little more analysis, but it is still true
that they contain four-verex paths.
$C_5$ is pure simply because all obstruction to purity have at least 6 vertices.
Therefore the 5-vertex hypergraph $co(C_5)$ is an $H$-perfect cocycle and it is not an entropy splitting hypergaph. 

{\bf Remark:} Gabor
Simonyi pointed out to us also a direct inductive argument that entropy-splitting hypergraphs are doubly perfect.

\section {Connections, problems, and other notions of perfectness}
\label {s:o}


\subsection {Vertex colorings and $\chi$-boundedness}

The families of hypergraphs described in this paper are closed under induced subhypergraphs and therefore can be described
in terms of forbidden induced subhypergraphs. The study of families of
graphs described in terms of forbidden induced subgraphs is
a rich part of graph theory and extensions for uniform hypergraphs are of interest.
In our definitions we replaced vertex coloring for perfect graphs with proper
colorings of sets of $(k-1)$ vertices but it is of interest to also examine vertex colorings. 

  A family of $k$-uniform hypergraphs is $\chi$-bounded if for some
  function $f$ the vertices of every hypergraph $G$
  in the family  can be covered by $f(\omega (G))$ independent sets.
  A family of $k$-uniform hypergraphs is $\chi^c$-bounded if the family of its complements are $\chi$-bounded
  or , in other words, if for some
  function $f$ the vertices of every hypergraph $G$
  in the family  can be covered by $f(\alpha (G))$ cliques.
  For graphs ($k=2$) this notion was studied starting with works of Gy\'arf\'as and others,
  see \cite {ScoSey20} for a recent survey. For example, it is known
  that the class of graphs with no odd holes is $\chi$-bounded.

  {\bf Problem:} Are $H_\omega$-perfect (or even $C_\omega$-perfect) $k$-uniform hypergraphs $\chi^c$-bounded?
  (Equivalent formulation: Are $H_\alpha$-perfect hypergraphs $\chi$-bounded?)

As we already mentioned, $H_\omega$-perfect hypergraphs are not $\chi$-bounded since
simple hypergraphs that can have arbitrary large chromatic number are $H_\omega$-perfect.

It is easy to see (directly) that doubly-perfect 3-hypergraphs are $\chi$-bounded with $f(m)=m-1$.
Indeed if $\omega (G)=t$ then for a vertex $v$ $\omega (lk_G(v))=t-1$ and therefore the vertices of $lk_G(v)$
can be covered by  $t-1$ independent sets. Now, since $G$ is a cocycle
these independent sets remain independent in $G$ and
adding $v$ to
one of them describes $V(G)$ as the union of $t-1$ independent set.
The class of $\chi$-bounded $k$-uniform hypergraphs
with $f(m)=m-k+2$ seems another interesting extension of perfect graphs.
The strongest form of $\chi$-boundedness that we can have for $k$-uniform hypergraphs is with
$f(n)=\lceil n/(k-1) \rceil$,
and the restricted class of $\chi$-bounded $k$-uniform hypergraphs
with $f(n)=\lceil n/(k-1) \rceil$ is yet another extension
of perfect graphs worthy of study.

\subsection {Analogs for chordal graphs and for interval graphs}

A different avenue for perfect hypergraphs could start with various high dimensional
extensions from the literature for chordal graphs. One definition is that a $k$-uniform
hypergraph $G$ is ``chordal''
if its clique complex $C(G)$ is $(k-1)$-Leray, meaning that $H_i(K)=0$ for every $i\ge k-1$ and
every induced subcomplex $K$ of $C(G)$. (Here, the clique complex $C(G)$ of a hypergraph  $G$ 
is the simplicial complex on the vertices of $G$ whose faces correspond to sets of vertices that span a complete hypergraph.)
This definition of chordality coincided with the
definition of chordal graphs for $k=2$, namely
the homological condition simply asserts that the graph has no induced cycles with more than three edges.
Certain refinements of this notion were considered in \cite {ANS16,Adi17}. A very different notion
of ``chordal hypergraphs'' was defined by Voloshin in his book \cite {Vol09} Chapter 8.

  It is known, \cite {AKMM02} that
  if $G$ is a $k$-uniform chordal hypergraph (according to the definition above)
  then it is $\chi^c$-bounded,
  in other words if $\alpha=\alpha (G)$ then $G$ can be covered by $f(\alpha,k)$ cliques.

An interesting larger class of hypergraphs extending the class of graphs with no odd holes
would allow $(k-1)$-dimensional homological cycles provided they are $k$-partite.
We do not know if $\chi^c$-boundedness still holds.
(Imposing this condition both for $G$ and its complement leads again to the class
of perfect graphs for $k=2$.)

A special case of chordal $k$-uniform hypergraphs (which motivated their definition)
are those represented by collections of convex sets in $R^{k-1}$. Given such a collection we consider the hypergraph where
vertices correspond to sets of the collection, and edges correspond to $k$ sets with non-empty intersection. 
When $k=2$ we obtain the class of interval graphs.

\subsection {The Hadwiger--Debrunner property}


Another notion worth considering 
is the following:
Consider the class of $k$-uniform hypergraphs $G$ with the following property (that we call the $HD_r$-property):  
If $H$ is an induced subhypegraph of $G$ and if $p$ and $q$, $V(H) \ge p > q \ge k$, are integers satisfying
  \begin {equation}
      p(r-1)<(q-1)r,
     \end {equation}
\noindent
and if every set of $p$ vertices, contains a complete subhypergraph of size $q$
then $V(G)$ can be covered by $p-q+1$ cliques.

 Wagner conjectured (see \cite {Eck03}) that every $k$-uniform hypergraph has the $HD_k$ property, and we are mainly
  interested in the case that $r=k-1$.
  Perfect graphs are graphs with the $H_1$-property.
  A proof of the $HD_{k-1}$-property
  for $k$-uniform hypergraphs associated with families of convex sets in $R^{k-1}$ goes back to Hadwiger and Debrunner \cite {HD57}.  

\subsection {Voloshin's C-perfectness}

A different notion of perfectness for hypergraphs was pioneered by Voloshin \cite {Vol95,Vol02}
and further studied by Bujt\'as and Tuza \cite {BujTuz09}. Voloshin considered colorings of hypergraphs $G$ where
no edge is multicolored (or rainbow). He defined $\bar \chi (G)$ as the maximum number of colors in such
a coloring. Clearly, $\bar \chi (G) \leq \alpha (G)$,  
and $G$ is perfect according to Voloshin if for every induced subhypergraph $H$,
$\bar \chi (H)=\alpha (H)$.  

\subsection {Is there a homological description of  perfect graphs?}

We mentioned that chordal graphs have a simple homological description: $G$ is chordal if and only if for every induced subgraph $H$ of $G$, $H_i(C(H))=0$, for every $i>0$, where $C(H)$ is the clique complex of $H$.  
It is an interesting question if a similar homological definition exists for perfect graphs.  
  

\subsection *{Acknowledgment} We are thankful to Gabor Simonyi, Vitaly Voloshin and an anonymous
referee for helpful remarks. We thank
the referee especially for detecting several mistakes in an earlier version of the paper.

\end{document}